\DeclareMathAlphabet\EuScript{U}{eus}{m}{n}
\DeclareMathAlphabet\EuScriptBold{U}{eus}{b}{n}
\DeclareMathAlphabet\Eurm{U}{eur}{m}{n}
\DeclareMathAlphabet\Eurb{U}{eur}{b}{n}
\newcommand{\mathcalb}{\EuScriptBold}
\newtheorem{theorem}{Theorem}[section]
\newtheorem{lemma}[theorem]{Lemma}
\theoremstyle{definition}
\newtheorem{definition}[theorem]{Definition}
\theoremstyle{remark}
\newtheorem{remark}[theorem]{Remark}
\numberwithin{equation}{section}
\begin{document}

\newcommand\supess{\mathop{\rm sup\,ess}}
\newcommand\diam{\mathop{\rm diam}}
\newcommand{\AAA}{ {\mathscr A} }
\newcommand{\BBB}{ {\mathscr B} }
\newcommand{\CCC}{ {\mathscr C} }
\newcommand{\DDD}{ {\mathscr D} }
\newcommand{\BBBog}{ {\mathcalb B} }
\newcommand{\WWW}{\Omega}
\newcommand{\WWWalg}{\mathfrak M}
\newcommand{\iii}{\infty}
\newcommand{\ttt}{\rightarrow}
\newcommand{\xx}{{}^\ast\!\otimes}
\newcommand{\ox}{{}^\ast\!\otimes}
\renewcommand{\lll}{\left}
\newcommand{\rrr}{\right}
\newcommand{\llm}{\left(}
\newcommand{\rrm}{\right)}
\newcommand{\lls}{\left[}
\newcommand{\rrs}{\right]}
\newcommand{\llv}{\left\{}
\newcommand{\rrv}{\right\}}
\newcommand{\llp}{\left<}
\newcommand{\rrp}{\right>}
\newcommand{\lla}{\left|}
\newcommand{\rra}{\right| }
\newcommand{\lln}{\left\Vert}
\newcommand{\rrn}{\right\Vert}
\newcommand{\llu}{\left|\!\left|\!\left|}
\newcommand{\rru}{\right|\!\right|\!\right|}
\newcommand{\lle}{\lefteqn}
 \newcommand{\llao}{\vert}
 \newcommand{\rrao}{\vert}
 \newcommand{\llaj}{\bigl\vert}
 \newcommand{\rraj}{\bigr\vert}
 \newcommand{\llad}{\Bigl\vert}
 \newcommand{\rrad}{\Bigr\vert}
 \newcommand{\llat}{\biggl\vert}
 \newcommand{\rrat}{\biggr\vert}
 \newcommand{\llac}{\Biggl\vert}
 \newcommand{\rrac}{\Biggr\vert}
 \newcommand{\zza}[1]{\left\vert{#1}\right\vert}   
 \newcommand{\zzao}[1]{\vert{#1}\vert}
 \newcommand{\zzaj}[1]{\bigl\vert{#1}\bigr\vert}
 \newcommand{\zzad}[1]{\Bigl\vert{#1}\Bigr\vert}
 \newcommand{\zzat}[1]{\biggl\vert{#1}\biggr\vert}
 \newcommand{\zzac}[1]{\Biggl\vert{#1}\Biggr\vert}
 \newcommand{\zzm}[1]{\left({#1}\right)}           
 \newcommand{\llno}{\vert\vert}
 \newcommand{\rrno}{\vert\vert}
 \newcommand{\llnj}{\bigl\vert\!\bigl\vert}
 \newcommand{\rrnj}{\bigr\vert\!\bigr\vert}
 \newcommand{\llnd}{\Bigl\vert\!\Bigl\vert}
 \newcommand{\rrnd}{\Bigr\vert\!\Bigr\vert}
 \newcommand{\llnt}{\biggl\vert\!\biggl\vert}
 \newcommand{\rrnt}{\biggr\vert\!\biggr\vert}
 \newcommand{\llnc}{\Biggl\vert\!\Biggl\vert}
 \newcommand{\rrnc}{\Biggr\vert\!\Biggr\vert}
 \newcommand{\lluu}{\vert\vert\vert}
 \newcommand{\rruu}{\vert\vert\vert}
 \newcommand{\lluo}{\vert\!\:\!\vert\!\:\!\vert}
 \newcommand{\rruo}{\vert\!\:\!\vert\!\:\!\vert}
 \newcommand{\lluj}{\bigl\vert\!\bigl\vert\!\bigl\vert}
 \newcommand{\rruj}{\bigr\vert\!\bigr\vert\!\bigr\vert}
 \newcommand{\llud}{\Bigl\vert\!\Bigl\vert\!\Bigl\vert}
 \newcommand{\rrud}{\Bigr\vert\!\Bigr\vert\!\Bigr\vert}
 \newcommand{\llut}{\biggl\vert\!\biggl\vert\!\biggl\vert}
 \newcommand{\rrut}{\biggr\vert\!\biggr\vert\!\biggr\vert}
 \newcommand{\lluc}{\Biggl\vert\!\Biggl\vert\!\Biggl\vert}
 \newcommand{\rruc}{\Biggr\vert\!\Biggr\vert\!\Biggr\vert}
 \newcommand{\zzn}[1]{\left|\left|{#1}\right|\right|}           
 \newcommand{\zzu}[1]{\zza\!{\zza{\!\zza{#1}\!}\!}}         
 \newcommand{\zzuo}[1]{|\!\:\!|\!\:\!|{#1}|\!\:\!|\!\:\!|}
 \newcommand{\zzuj}[1]{\zzaj{\!\zzaj{\!\zzaj{#1}\!}\!}}
 \newcommand{\zzud}[1]{\zzad{\!\zzad{\!\zzad{#1}\!}\!}}
 \newcommand{\zzut}[1]{\zzat{\!\zzat{\!\zzat{#1}\!}\!}}
 \newcommand{\zzuc}[1]{\zzac{\!\zzac{\!\zzac{#1}\!}\!}}
\font\cyr=wncyr10
\let\CMcal=\mathcal
\newcommand{\ggint}{{\textrm{\rm\scriptsize G}}\!\!\!\;\!\!\!\int}
\newcommand{\gggint}{{\textrm{\rm\scriptsize G}}\!\!\!\!\!\int}
\newcommand\ipt{ i.p.t. integral }
\newcommand\wmeasur{{$[\mu]$ weakly$^*$-mea\-surable }}
\newcommand{\bbproo}{ {\it Proof:\/} }
\newcommand{\eeproo}{$ $\hfill $ $\qed}
\newcommand{\bbdef}{\begin{definition}\sl }
\newcommand{\eedef}{\end{definition}\rm }
\newcommand{\bbl}{\begin{lemma}}
\newcommand{\eel}{\end{lemma}}
\newtheorem{corollary}[theorem]{Corollary}
\newcommand{\proo}{ {\it Proof:\/} }
\newcommand{\sui}{\sum_{i=1}^I}
\newcommand{\limm}{\lim_{n\ttt\infty}}
\newcommand{\suji}{\sum_{j=1}^\iii}
\newcommand{\suni}{\sum_{n=1}^\iii}
\newcommand{\tr}{\textrm{\rm tr}}
\newcommand{\AAt}{ {\mathscr{A}}_t }
\newcommand{\AAs}{ {\mathscr{A}}_s }
\newcommand{\BBt}{ {\mathscr{B}}_t }
\newcommand{\HH}{{\mathcal H}}
\newcommand{\BH}{ {\mathcalb B}({\mathcal H})}
\newcommand{\ccc}{ {\mathcalb C}}
\newcommand{\ccj}{ {\mathcalb  C}_1({\CMcal H}) }
\newcommand{\cci}{ {\mathcalb  C}_\iii({\CMcal H}) }
\newcommand{\ccu}{ {\mathcalb  C}_{\lluo\cdot\rruo}({\CMcal H}) }
\newcommand{\cco}{ {\mathcalb  C}_{\llo\cdot\rro}({\CMcal H}) }
\newcommand{\dm}{\,d\mu}
\newcommand{\dt}{\,d\mu(t)}
\newcommand{\LJ}{ L^1(\WWW,\mu)}
\newcommand{\LD}{{ L^2(\WWW,\mu)}}
\newcommand{\LDH}{ {L^2(\Omega,\mu,{\mathcal H})}}
\newcommand{\LDBH}{{ L_G^2(\Omega,\mu,\BH)}}
\newcommand{\LDCU}{{ L_G^2(\Omega,\mu,\ccu)}}
\newcommand{\LI}{ L^\iii(\WWW,\mu)}
\newcommand{\LP}{ L^p(\WWW,\mu)}
\newcommand{\inN}{{\in\mathbb N}}
\newcommand{\inR}{{\in\mathbb R}}
\newcommand{\inC}{{\in\mathbb C}}
\newcommand{\gint}{{\textrm{\rm\scriptsize G}}\!\!\!\;\!\!\!\int}
\newcommand{\Gint}{{\textrm{\rm\ G}}\!\!\!\!\!\!\int\nolimits}
\newcommand{\innE}{\int\nolimits_{E}}
\newcommand{\InE}{{\CMcal I}_{E}}
\newcommand{\JnE}{{\CMcal J}_{E}}
\newcommand{\Inaf}{{\CMcal I}_f{}}
\newcommand{\Jnag}{{\CMcal J}_g{}}
\newcommand{\LL}{L^1(\WWW,d\mu)}
\newcommand{\mmm} {$[\mu]$ a.e. on $\WWW$ }

\title[Landau and Gr\" uss type  inequalities]{Landau and Gr\" uss type inequalities for inner product type integral transformers
in norm ideals}

\author{Danko R. Joci\'c}
\address{University of Belgrade, Department of  Mathematics, Studentski trg 16,
P.O.box 550, 11000 Belgrade, Serbia}
\email{jocic@matf.bg.ac.rs}


\newcommand{\Dj}{\mbox{-\kern-.4em D}}

\author{\Dj OR\Dj E KRTINI\'C}
\address{University of Belgrade, Department of  Mathematics, Studentski trg 16,
11000 Belgrade, Serbia} \email{georg@matf.bg.ac.rs}

\author{Mohammad Sal Moslehian}
\address{Department of Pure Mathematics, Center of Excellence in Analysis on Algebraic Structures
(CEAAS), Ferdowsi University of Mashhad, P.O. Box 1159, Mashhad
91775, Iran.\\ \url{http://profsite.um.ac.ir/~moslehian/}}
\email{moslehian@ferdowsi.um.ac.ir}\email{moslehian@ams.org}

\subjclass[2010]{Primary 47A63; Secondary 46L05, 47B10, 47A30,
47B15}



\keywords{Landau  type inequality, Gr\" uss type inequality,
Gel'fand integral, norm inequality, elementary operators, Hilbert
modules}

\begin{abstract}
For a probability measure $\mu$ and for square integrable fields
$(\mathscr{A}_t)$ and $(\mathscr{B}_t)$ ($t\in\Omega$) of commuting
normal operators we prove Landau type inequality
\begin{multline*}
\llu\int_\Omega\mathscr{A}_tX\mathscr{B}_td\mu(t)-
      \int_\Omega\mathscr{A}_t\,d\mu(t)X\!\!\int_\Omega\mathscr{B}_t\,d\mu(t)
\rru \\
\le \llu
\sqrt{\,\int_\Omega|\mathscr{A}_t|^2\dt-\left|\int_\Omega\mathscr{A}_t\dt\right|^2}X
\sqrt{\,\int_\Omega|\mathscr{B}_t|^2
\dt-\left|\int_\Omega\mathscr{B}_t\dt\right|^2} \rru
\end{multline*}
for all  $X\in\mathcalb{B}(\mathcal{H})$ and for all unitarily
invariant norms $\lluo\cdot\rruo$.

For  Schatten $p$-norms similar inequalities are given for arbitrary
double square integrable fields. Also, for all bounded self-adjoint
fields satisfying $C\le\mathscr{A}_t\le D$  and
$E\le\mathscr{B}_t\le F$  for all $t\in\Omega $ and some  bounded
self-adjoint operators $C,D,E$ and $F$, then for all $X\in\ccu$ we
prove Gr\"uss type inequality
\begin{equation}
\llu\int_\Omega\mathscr{A}_tX\mathscr{B}_t \dt-     \int_\Omega
\mathscr{A}_t\,d\mu(t)X\!\!   \int_\Omega\mathscr{B}_t\,d\mu(t)
\rru\leq \frac{\|D-C\|\cdot\|F-E\|}4\cdot\lluo X\rruo.\nonumber
\end{equation}
More general results for arbitrary bounded fields are also given.
\end{abstract}

\maketitle

\section{Introduction}

The Gr\" uss inequality \cite{GRU}, as a complement of Chebyshev's
inequality, states that if $f$ and $g$ are integrable real functions
on $[a,b]$ such that $C\leq f(x)\le D$ and $E\leq g(x)\le F$ hold
for some  real constants $C,D,E,F$ and for all $x\in[a,b]$, then
\begin{equation}
  \left|\frac1{b-a}\int_a^bf(x)g(x)dx-\frac1{(b-a)^2}\int_a^b f(x)dx\int_a^b g(x)dx\right|
\leq\frac14(D-C)(F-E)\,;\label{grisovaca}
\end{equation}
see \cite{M-M} for several proofs of this inequality in the discrete
form. It has been the subject of intensive investigation, in which
conditions on functions are varied to obtain different estimates;
see \cite{DRA2,M-P-F} and references therein. This inequality has
been investigated, applied and generalized by many authors  in
different areas of mathematics, among others  in inner product
spaces \cite{DRA1}, quadrature formulae \cite{CHE,UJE}, finite
Fourier transforms \cite{B-C-D-R}, linear functionals
\cite{A-B,I-P-T}, matrix traces \cite{REN}, inner product modules
over $H^*$-algebras and $C^*$-algebras \cite{B-I-V, I-V}, positive
maps \cite{M-R} and completely bounded maps \cite{P-R}.

\subsection{Symmetric gauge functions, unitarily invariant norms
            and their norm ideals}

Let $\mathcalb{B}(\mathcal{H})$ and
$\mathcalb{C}_\infty(\mathcal{H})$ denote respectively spaces of all
bounded and all compact linear operators acting on a separable,
complex Hilbert space $\mathcal{H}$. Each "symmetric gauge" (s.g.)
function $\Phi$ on sequences gives rise to  a unitarily invariant
(u.i) norm on operators defined by
$\left\|X\right\|_\Phi=\Phi(\{s_n(X)\}_{n=1}^\infty)$, with
$s_1(X)\ge s_2(X)\ge\hdots$ being the singular values of $X$, i.e.,
the eigenvalues of $|X|=(X^*X)^\frac12.$ We will denote by the
symbol $\left|\left|\left|\cdot\right|\right|\right|$ any such norm,
which is therefore defined on a naturally associated norm ideal
$\ccu$ of $\mathcalb{C}_\infty(\mathcal{H})$ and  satisfies the
invariance property $\lluo UXV\rruo=\lluo X\rruo$ for all $X\in\ccu$
and for all unitary operators $U,V\in\BH$.

Specially well known among u.i.  norms are the Schatten $p$-norms
defined for $1\le p<\infty$ as $\|X\|_p=\sqrt[p]{\,\sum_{n=1}^\infty
s_n^p(X)}$, while $\|X\|_\infty =\|X\|=s_1(X)$ coincides with the
operator  norm $\|X\|$. Minimal and maximal u.i. norm are among
Schatten norms, i.e., $\|X\|_\infty\le\llu X\rru\le\|X\|_1$ for all
$X\in\mathcalb{C}_1(\mathcal{H})$ (see inequality (IV.38) in
\cite{bhatijinaknjiga}). For $f,g\in\mathcal{H}$, we will denote by
$g^*\otimes f$ one dimensional operator $(g^*\otimes f)h=\langle
h,g\rangle f$ for all $h\in\mathcal{H}$, known that the linear span
of $\{g^*\otimes f\,|\, f,g\in\HH$ is dense in each of
$\mathcalb{C}_p(\mathcal{H})$ for $1\le p\le\infty$. Schatten
$p$-norms are also classical examples of $p$-reconvexized norms.
Namely, any u.i. norm  $\lln\cdot\rrn_\Phi$ could be
$p$-reconvexized for any $p\ge1$ by setting $\lln A\rrn_{\Phi^{(p)}}
= \lln |A|^p\rrn_{\Phi}^{\frac1p}$ for all $A\in\BH$ such that
$|A|^p\in \ccc_{\Phi}(\HH)$. For the proof of the triangle
inequality and other properties of these norms see preliminary
section in \cite{joc09ii}; for the characterization of the dual norm
for $p$-reconvexized one see Th. 2.1 in \cite{joc09ii}.

\subsection{Gel'fand integral of operator valued functions}

Here we will recall the basic properties and terminology related to
the notion of Gel'fand integral, when it applies to operator valued
(o.v.) functions. Since this theory is well known, we give those
properties without the proof.
Following \cite{du}, p.53., if $(\WWW,\WWWalg,\mu)$ is a measure
space,
the mapping $\AAA:\WWW\ttt \BH$ will be called \wmeasur if  a scalar
valued function $t\ttt \tr (\AAt Y)$ is measurable for any
$Y\in\ccj$. In addition, if  all these functions are in $\LJ$, then
according to the fact that $\BH$ is the dual space of $\ccj$, for
any $E\in\WWWalg$  there will be the unique operator $\InE\in\BH$,
called the Gel'fand ({\cyr Gelp1fand}) or weak $^*$-integral of
$\AAA$ over $E$, such that
\begin{equation}
\tr(\InE Y)=\int_E \tr(\AAt Y)\dt \textrm{\qquad for all
$Y\in\ccj$.} \label{geljfandovintegral}
\end{equation}
We will denote it  by $\int_E\AAt\dt$, $\int_E\AAA\dm$ or
exceptionally by
                      $\gggint_{\,E}\AAA\dm,$
if the context requires to distinguish this one from other types of
integration.

A  practical tool for this type of integrability to deal with is the
following
\begin{lemma}\label{lema1}

 $\AAA:\WWW\ttt \BH$ is \wmeasur
 (resp. $[\mu]$ weakly $^*$-integrable) iff
 scalar valued functions $t\ttt \left<\AAt f,f\right>$ are  $[\mu]$ measurable (resp. integrable) for every
$f\in\HH$.

\end{lemma}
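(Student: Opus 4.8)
The plan is to transfer information between the two families of scalar functions by means of rank-one operators and the Schmidt decomposition. I would organize the argument around the elementary identity, valid for every $f,g\in\HH$ and every $t$,
\[
\tr\bigl(\AAt(g^*\otimes f)\bigr)=\langle\AAt f,g\rangle,
\]
which follows from $\AAt(g^*\otimes f)=g^*\otimes(\AAt f)$. The forward implication is then immediate: taking $g=f$, the operator $f^*\otimes f$ lies in $\ccj$, so if $t\mapsto\tr(\AAt Y)$ is measurable (resp. integrable) for all $Y\in\ccj$, specializing to $Y=f^*\otimes f$ shows that $t\mapsto\langle\AAt f,f\rangle$ is measurable (resp. integrable) for every $f\in\HH$.

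For the converse I would first pass from the diagonal forms to the full bilinear data. By polarization, each $\langle\AAt f,g\rangle$ is a fixed finite linear combination of diagonal terms $\langle\AAt h,h\rangle$, so measurability (resp. integrability) of the diagonal forms propagates to $t\mapsto\langle\AAt f,g\rangle$ for all $f,g\in\HH$. In the measurable case this already finishes the proof: fixing an orthonormal basis $\{e_n\}$ of $\HH$ and any $Y\in\ccj$, the product $\AAt Y$ is trace class, and $\tr(\AAt Y)=\sum_n\langle\AAt(Ye_n),e_n\rangle$ is a pointwise convergent series of measurable functions, hence measurable.

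The integrable converse is the crux, and here I would invoke the closed graph theorem together with the uniform boundedness principle. Consider the sesquilinear map $(f,g)\mapsto[t\mapsto\langle\AAt f,g\rangle]$ from $\HH\times\HH$ into $\LJ$; by the previous step it is well defined. It is separately continuous: with $g$ fixed, the linear map $f\mapsto\langle\AAt f,g\rangle$ has closed graph, since $f_n\to f$ in $\HH$ forces $\langle\AAt f_n,g\rangle\to\langle\AAt f,g\rangle$ pointwise (each $\AAt$ being bounded), and this pointwise limit must coincide a.e. with any $\LJ$-limit of the images. A separately continuous sesquilinear form on Banach spaces is bounded, so there is a constant $M$ with $\lln\langle\AAt f,g\rangle\rrn_{\LJ}\le M\,\|f\|\,\|g\|$. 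Finally, for arbitrary $Y\in\ccj$ I would use the Schmidt decomposition $Y=\sum_n s_n(Y)\,e_n^*\otimes\phi_n$ with orthonormal systems $\{e_n\},\{\phi_n\}$ and $\sum_n s_n(Y)=\|Y\|_1$; then $\tr(\AAt Y)=\sum_n s_n(Y)\langle\AAt\phi_n,e_n\rangle$ and
\[
\int_\WWW\bigl|\tr(\AAt Y)\bigr|\dt\le\sum_n s_n(Y)\,\lln\langle\AAt\phi_n,e_n\rangle\rrn_{\LJ}\le M\sum_n s_n(Y)=M\,\|Y\|_1<\infty ,
\]
so $t\mapsto\tr(\AAt Y)$ is integrable. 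I expect the genuine obstacle to be precisely this last upgrade from the rank-one (diagonal) hypotheses to arbitrary trace-class $Y$: the uniform bound produced by the closed-graph/uniform-boundedness argument is what licenses interchanging the sum with the integral in the Schmidt expansion and thereby closes the proof.
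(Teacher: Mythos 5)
Your proof is correct. Note that the paper itself offers no proof of this lemma --- it is stated as part of the ``well known'' background on Gel'fand integration (with a pointer to Diestel--Uhl) --- so there is no argument of the authors' to compare yours against; what you have written is essentially the standard Dunford-type argument, correctly adapted to the trace duality $\BH=(\ccj)^*$. The forward direction via $\tr\bigl(\AAt(f^*\otimes f)\bigr)=\langle\AAt f,f\rangle$ is right, polarization correctly upgrades diagonal data to all matrix entries $\langle\AAt f,g\rangle$, and the measurable case does follow from the pointwise convergent basis expansion $\tr(\AAt Y)=\sum_n\langle\AAt Ye_n,e_n\rangle$. You have also put your finger on the one genuinely nontrivial point: integrability of each $\langle\AAt\phi_n,e_n\rangle$ alone does not let you sum the Schmidt series, and the uniform bound $\lln\langle\AAA_{\cdot}f,g\rangle\rrn_{\LJ}\le M\|f\|\|g\|$ obtained from the closed graph theorem (each $\AAt$ bounded forces pointwise convergence, an $\LJ$-convergent sequence has an a.e.\ convergent subsequence, so the graph is closed) plus the uniform boundedness principle for separately continuous sesquilinear maps is exactly what licenses the termwise estimate $\int_\WWW|\tr(\AAt Y)|\dt\le M\|Y\|_1$. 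The only cosmetic remarks: the interchange of sum and integral in that last display should be flagged as Tonelli for series of nonnegative terms, and one should note in passing that the classical one-step version of this argument (closed graph applied directly to $Y\mapsto\tr(\AAA_{\cdot}Y)$ from $\ccj$ to $\LJ$) is not available here precisely because well-definedness of that map on all of $\ccj$ is what is being proved; your two-stage route through rank-one operators is the correct order of operations.
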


In view of Lemma \ref{lema1},
the basic definition (\ref{geljfandovintegral}) of Gel'fand integral
for o.v. functions can be reformulated as follows:
\begin{lemma}\label{novadefinicionalema}

If  $\left<\AAA f,f\right>\in L^1(E,\mu)$ for all $f\in\HH$, for
some $E\in\WWWalg$ and a $\BH$-valued function $\AAA$ on $E$, then
the mapping $f\ttt\int_E  \left<\AAt f,f\right>\dt$ represents a
quadratic form of (the unique) bounded operator (denoted by)
$\int_E\AAA\dm$ or  $\int_E\AAA_t\dt$,
 (we refer to it  as to  ``intuitive''  integral
 of $\AAA$ over $E$),
 satisfying
\begin{equation}
  \left<\left(\int_E\AAA_t\dt\right) f,g\right>=
 \innE \left<\AAt f,g\right>\,d\mu (t) \textrm{\qquad for all  $f,g\in\HH$,}
\nonumber
\end{equation}
as well as
\begin{equation}
 \textrm{\rm tr}\!\left(\int_E\AAA_t\dt Y\right)=
 \innE \tr(\AAt Y)    \dt \textrm{\qquad for all  $Y\in\ccj$.}
\nonumber
\end{equation}
\end{lemma}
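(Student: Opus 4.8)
The plan is to deduce both displayed identities from the already available structure theory of the Gel'fand integral, with Lemma \ref{lema1} doing the main work at the outset. First I would note that the hypothesis, namely $\langle\AAA f,f\rangle\in L^1(E,\mu)$ for every $f\in\HH$, is exactly the scalar integrability condition appearing in the integrability half of Lemma \ref{lema1} (read on the set $E$). Applying that lemma therefore upgrades the integrability of the quadratic forms $t\ttt\langle\AAt f,f\rangle$ to full $[\mu]$ weak$^*$-integrability of $\AAA$ over $E$, i.e.\ to integrability of $t\ttt\tr(\AAt Y)$ for every $Y\in\ccj$. By the defining relation (\ref{geljfandovintegral}) this already guarantees that the operator $\InE=\int_E\AAt\dt\in\BH$ exists, is bounded, and is unique, and it immediately gives the trace identity $\tr(\InE Y)=\innE\tr(\AAt Y)\dt$ for all $Y\in\ccj$, which is the second assertion.

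The key step is then to extract the sesquilinear (hence quadratic) form identity from the trace identity by choosing rank-one test operators. For fixed $f,g\in\HH$ the operator $Y=g^{*}\otimes f$ lies in $\ccj$, and a one-line computation using $(g^{*}\otimes f)h=\langle h,g\rangle f$ gives $\AAt(g^{*}\otimes f)=g^{*}\otimes(\AAt f)$ and hence $\tr(\AAt(g^{*}\otimes f))=\langle\AAt f,g\rangle$; likewise $\tr(\InE(g^{*}\otimes f))=\langle\InE f,g\rangle$. Substituting $Y=g^{*}\otimes f$ into (\ref{geljfandovintegral}) would then yield
\begin{equation}
\langle\InE f,g\rangle=\innE\langle\AAt f,g\rangle\dt\nonumber
\end{equation}
for all $f,g\in\HH$, which is precisely the first displayed formula; setting $g=f$ recovers the stated quadratic form. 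Since a bounded operator is completely determined by its quadratic form via polarization, the operator represented by $f\ttt\int_E\langle\AAt f,f\rangle\dt$ is unique, which justifies the notation $\int_E\AAA\dm$.

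I do not anticipate a genuine obstacle, since the lemma is essentially a reformulation of the definition of the Gel'fand integral; the only points demanding care are verifying that the rank-one operators $g^{*}\otimes f$ are indeed trace class so that (\ref{geljfandovintegral}) is legitimately applicable to them, and confirming that Lemma \ref{lema1} may be invoked locally on $E$ to pass from integrability of the quadratic forms alone to the full weak$^*$-integrability that underwrites the existence and boundedness of $\InE$.
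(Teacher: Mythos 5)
Your proof is correct and follows exactly the route the paper intends: the paper states this lemma without proof (declaring earlier that these Gel'fand-integral facts are "well known"), remarking only that the definition (\ref{geljfandovintegral}) "can be reformulated" this way "in view of Lemma \ref{lema1}." Your argument---upgrading integrability of the quadratic forms to weak$^*$-integrability via Lemma \ref{lema1}, invoking the definition (\ref{geljfandovintegral}) for existence, boundedness and the trace identity, and then testing against the rank-one trace-class operators $g^*\otimes f$ to recover the sesquilinear form, with polarization giving uniqueness---is precisely that verification, correctly executed.
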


In other words, integrability of quadratic forms of an o.v. function
assures its Gel'fand's integrability and so the notions of
``intuitive'' and Gel'fand's integral for o.v. functions coincide.



Following Ex.2 in \cite{JOC},
for a \wmeasur 
 function
     $\AAA:\WWW\ttt \BH$  we have that $\AAA^*\AAA$ is Gel'fand
integrable iff $ \int_\Omega \lln\AAA_t f\rrn^2\dt<\iii$ for all
$f\in\HH$. Moreover,
for a \wmeasur 
 function
     $\AAA:\WWW\ttt \BH$ let us consider the operator of  ``vector valued functionalization'' (of vectors),
i.e., a linear transformation $\vec{\AAA}:D_{\vec{\AAA}}\ttt\LDH$,
with the domain $D_{\vec{\AAA}}=\llv f\in\HH \,| \,
  \int_\Omega \lln\AAA_t f\rrn^2\dt<\iii\rrv$, defined by
\begin{equation}
 ({\vec{\AAA}}f)(t)=\AAA_t f \qquad
\textrm{\rm for $[\mu]$ \,a.e.\, $t\in\WWW$ and all $f\in
 D_{\vec{\AAA}}$.}
\nonumber
\end{equation}
Now, another way for  understanding  Gel'fand's integrability of
$\AAA^*\AAA$ is provided by the following. \bbl
\label{zatvorenjeogranichen}
$\vec{\AAA}$ is a closed operator; it is
 bounded if and only if $\AAA^*\AAA$ is Gel'fand integrable,
and whenever this is the case, then $ \llaj\vec{\AAA}\rraj=\sqrt{
  \int_\WWW \AAA_t^*\AAA_t \dt}$ and
\begin{equation}
\llnj\vec{\AAA}\rrnj_{\BBBog(\HH,\LDH)}=
 \lln  \int_\WWW \AAA_t^*\AAA_t \dt\rrn^\frac12.
\nonumber
\end{equation}
If additionally $ \sqrt{ \int_\WWW \AAA_t^*\AAA_t \dt}\in
\ccc_{\Phi}(\HH)$, then
\begin{equation}
\llnj\vec{\AAA}\rrnj_{\ccc_{\Phi}(\HH,\LDH)}=
 \lln \sqrt{ \int_\WWW \AAA_t^*\AAA_t \dt}\rrn_{\ccc_{\Phi}(\HH)}.
\label{zatvogr2}
\end{equation}
\eel



Denoting $\lln\cdot\rrn_{\ccc_{\Phi}(\HH)}$ by $\llu\cdot\rru$, in
view of both Definition 1 and equality (3) in \cite{JOC} we now get
\begin{eqnarray}
\lluj{\AAA}\rruj_2&:=&\lln  \int_\WWW \AAA_t^*\AAA_t
\dt\rrn_{\ccc_{\Phi}(\HH)}^\frac12=
 \lln \sqrt{ \int_\WWW \AAA_t^*\AAA_t
 \dt}\rrn_{\ccc_{\Phi^{(2)}}(\HH)}\label{zatvogr3}\\ \nonumber
&=&\llnj \vec{\AAA}\rrnj_{\ccc_{\Phi^{(2)}}(\HH,\LDH)}.
\end{eqnarray}
Thus we have recognized the space $L_G^2(\WWW,\dm,\ccc_\Phi(\HH))$
of square integrable o.v. functions $\AAA$ such that
$\int_\WWW\AAA_t^*\AAA_t\dt\in \ccc_\Phi(\HH)$ as the isometrically
isomorph to the norm ideal of operators
$\ccc_{\Phi^{(2)}}(\HH,\LDH)\subseteq
 {\BBBog_{\Phi^{(2)}}(\HH,\LDH)}$, associated to a (2-reconvexized) s.g function
$\Phi^{(2)}$. Therefore the normability and the completeness of the
space $L_G^2(\WWW,\dm,\ccc_\Phi(\HH))$, as stated in Theorem 2.1 in
\cite{JOC}, follow immediately. This effectively gives us a
representation of $L_G^2(\WWW,\dm,\ccc_\Phi(\HH))$, as a Hilbert
module over a Banach $^*$-algebra ${\ccc_{\Phi}}(\HH)$ with its
${\ccc_{\Phi}}(\HH)$-valued inner product
$$\left<\!\left<\AAA,\BBB\right>\!\right>=
\int_\WWW \AAt^*\BBt\dt \qquad\textrm{\rm for all $\AAA,\BBB\in
L_G^2(\WWW,\dm,\ccc_\Phi(\HH))$,}$$ as a norm ideal
$\ccc_{\Phi^{(2)}}(\HH,\LDH)$ of operators from  $\HH$ into $\LDH$
equiped with its $\ccc_{\Phi}(\HH)$-valued inner product
$$\left<\!\!\left<\vec{\AAA},\vec{\BBB}\right>\!\!\right>=
\vec{\AAA^*}\vec{\BBB} =\int_\WWW \AAt^*\BBt\dt \qquad\textrm{\rm
for all $\vec{\AAA},\vec{\BBB} \in
 \ccc_{\Phi^{(2)}}(\HH,\LDH).$}$$

\subsection{Elementary operators and inner product type integral transformers in norm ideals}


For weakly$^*$-measurable o.v. functions $\AAA,\BBB:\WWW\to \BH$ and
for all $X\in\BH$ the function $t\to\AAA_t X\BBB_t$ is also
weakly$^*$-measurable. If these functions are Gel'fand integrable
for all $X\in\BH$, then the inner product type  linear
transformation $X\to\int_\WWW \AAA_t X\BBB_t\dt$ will be called an
inner product type (i.p.t.) transformer on $\BH$ and denoted by
$\int_\WWW \AAA_t \otimes\BBB_t\dt$ or ${\mathcal I}_{\AAA,\BBB}$. A
special case when  $\mu$ is the counting measure on $\mathbb N$ is
mostly known and widely investigated, and such transformers are
known as elementary mappings or elementary operators.

As shown in Lemma 3.1 (a) in \cite{JOC}, a sufficient condition is
provided when  $\AAA^*$ and $\BBB$ are both in
$L^2_G(\WWW,\dm,\BH).$ If each of families  $(\AAA_t)_{t\in\WWW}$
and  $(\BBB_t)_{t\in\WWW}$ consists of commuting normal operators,
then by Theorem 3.2 in \cite{JOC} the \ipt  transformer $\int_\WWW
\AAA_t \otimes\BBB_t\dt$ leaves every u.i. norm ideal
$\ccc_{\llu\cdot\rru}(\HH)$ invariant and the following
Cauchy-Schwarz inequality holds:
\begin{equation}
\llu \int_\WWW \AAA_t X\BBB_t\dt     \rru\le \llu \sqrt{\int_\WWW
\AAA_t^*\AAA_t \dt}    X
     \sqrt{\int_\WWW \BBB_t^*\BBB_t \dt}\rru
\end{equation}
for all $X\in \ccc_{\llu\cdot\rru}(\HH)$.

Normality and commutativity condition can be dropped for Schatten
$p$-norms as shown in Theorem 3.3 in \cite{JOC}. In Theorem 3.1 in
\cite{joc09i} a formula for the exact norm of the \ipt transformer
$\int_\WWW \AAA_t \otimes\BBB_t\dt$ acting on $\ccc_2(\HH)$ is
found. In Theorem 2.1 in \cite{joc09i}  the exact norm of the \ipt
transformer $\int_\WWW \AAA_t^* \otimes\AAA_t\dt$ is given for two
specific cases:
\begin{equation}
\lln \int_\WWW \AAA_t^*\otimes\AAA_t\dt
\rrn_{\BH\to\ccc_{\Phi}(\HH)}= \lln \int_\WWW \AAA_t^*\AAA_t\dt
\rrn_{\ccc_\Phi(\HH)}, \label{bhubiloshta}
\end{equation}
\begin{equation}
\lln \int_\WWW \AAA_t^*\otimes\AAA_t\dt
\rrn_{\ccc_{\Phi}(\HH)\to\ccc_1(\HH)}= \lln \int_\WWW
\AAA_t\AAA_t^*\dt     \rrn_{\ccc_{\Phi_*}(\HH)},
\nonumber
\end{equation}
where $\Phi_*$ stands for a s.g. function related to the dual space
$(\ccc_{\Phi}(\HH))^*$.

Also, as already noted in \cite{joc09i} at the end of page 2964, the
norm appearing in (\ref{zatvogr3}) equals to a square root of the
norm of the \ipt transformer $X\to\int_\WWW\AAA_t^* X\AAA_t\dt$ when
acting from $\BH$ to $\ccc_\Phi(\HH)$. As this quantity actually
presents a norm on the Banach space
$L_G^2(\WWW,\dm,\BH,\ccc_\Phi(\HH))$ as elaborated in Theorem 2.2 in
\cite{joc09i}, therefore we conclude that spaces
$L_G^2(\WWW,\dm,\BH,\ccc_\Phi(\HH))$ are
 both isometrically isomorph to the norm ideal
$\ccc_{\Phi^{(2)}}(\HH,\LDH).$ As the objects of consideration in
all those spaces are families of operators, from now  on we  will
refer to such objects as to {\bf field of operators} (for example
$(\AAA_t)_{t\in\WWW}$) in $L^2(\WWW,\mu,\ccc_\Phi(\HH))$. When  we
additionally require that the  adjoint field of operators
 $(\AAA_t^*)_{t\in\WWW}$  also belongs to
$L^2(\WWW,\mu,\ccc_\Phi(\HH))$, then we will say that
 $(\AAA_t)_{t\in\WWW}$  in doubly $\mu$ square integrable in
$\ccc_\Phi(\HH)$ on $\WWW.$

The norm appearing in (\ref{bhubiloshta})
and its associated space $L_G^2(\WWW,\dm,\BH,\ccc_\Phi(\HH))$
present only a  spacial case of norming a field
$\AAA=(\AAA_t)_{t\in\WWW}$. A much wider class of norms $ \lln
\cdot\rrn_{\Phi,\Psi}$ and their associated spaces
$L_G^2(\WWW,\dm,\ccc_\Phi(\HH),\ccc_\Psi(\HH))$ are
 given in
\cite{joc09i} by
\begin{equation}
  \lln \AAA\rrn_{\Phi,\Psi}=
\lln \int_\WWW \AAA_t^*\otimes \AAA_t\dt
 \rrn_{\BBBog(\ccc_\Phi(\HH),\ccc_\Psi(\HH))}^\frac12
\end{equation}
for an arbitrary pair of s.g. functions $\Phi$ and $\Psi$. For the
proof of completeness of the space
$L_G^2(\WWW,\dm,\ccc_\Phi(\HH),\ccc_\Psi(\HH))$ see Theorem 2.2 in
\cite{joc09i}.

The potential for finding  Gr\"uss type inequalities for  \ipt
transformers relies on the fact that $\int_\Omega
\AAA_t\otimes\BBB_td\mu(t)-\int_\Omega\AAA_t\dt \otimes
\int_\Omega\mathscr{B}_t \dt$ is also an   \ipt transformer. As the
representation for an
 \ipt transformer is not unique (as a rule), the successfulness of
the application of some  known inequalities to $\int_\Omega
\AAA_t\otimes\BBB_td\mu(t)-\int_\Omega\AAA_t\dt \otimes
\int_\Omega\BBB_t\dt$ mainly depends on the right  choice for its
representation.

Before exposing main results, we will draw our attention to the
following lemma, which we will use in the sequel.

\begin{lemma}
\label{optlema} If $\mu$ is a probability measure  on $\Omega$, then
for every field $(\mathscr{A}_t)_{t\in\Omega}$ in
$L^2(\Omega,\mu,\mathcalb{B}(\mathcal{H}))$, for all
$B\in\mathcalb{B}(\mathcal{H})$, for all unitarily invariant norms
$\lluo\cdot\rruo$ and for all $\theta>0$,
\begin{eqnarray}
\lefteqn{ \int_\Omega\left|\mathscr{A}_t-B\right|^2 \dt  =
 \int_\Omega\left|\mathscr{A}_t-\int_\Omega\AAA_t \dt\right|^2 \dt
+\lla \int_\Omega\AAA_t \dt-B\rra^2}\label{nulto}\\
 &\ge& \int_\Omega\left|\mathscr{A}_t-\int_\Omega\AAA_t
\dt\right|^2 \dt =\int_\Omega|\mathscr{A}_t|^2
\dt-\left|\int_\Omega\AAA_t \dt\right|^2; \label{prvo}
\end{eqnarray}
\begin{multline}
\min_{B\in\mathcalb{B}(\mathcal{H})}\llu \,\left|\int_\Omega\left|\mathscr{A}_t-B\right|^2\dt\rrr|^\theta\rru\\
=  \llu\, \left|\int_\Omega\left|\mathscr{A}_t-\int_\Omega\AAA_t
\dt\right|^2 \dt\right|^\theta \rru=\llu \,
\left|\int_\Omega|\mathscr{A}_t|^2 \dt- \left|\int_\Omega\AAA_t
\dt\right|^2\rrr|^\theta\rru. \label{drugo}
\end{multline}
\end{lemma}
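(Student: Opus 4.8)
The plan is to read \eqref{nulto} as a ``parallel--axis'' (bias--variance) identity, derive it by a direct expansion, and then extract the minimization \eqref{drugo} purely from monotonicity properties of the norm, the one genuinely delicate point being that $t\mapsto t^\theta$ is \emph{not} operator monotone once $\theta>1$. Throughout, write $M:=\int_\Omega\mathscr{A}_t\dt$ for the mean of the field; since $\mu(\Omega)=1$ and $(\mathscr{A}_t)\in L^2(\Omega,\mu,\BH)\subseteq L^1(\Omega,\mu,\BH)$, $M$ is a well-defined bounded operator whose quadratic form is given by Lemma~\ref{novadefinicionalema}.

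First I would prove \eqref{nulto}. Splitting $\mathscr{A}_t-B=(\mathscr{A}_t-M)+(M-B)$ and expanding the modulus square gives
\begin{equation}
\left|\mathscr{A}_t-B\right|^2=\left|\mathscr{A}_t-M\right|^2+(\mathscr{A}_t-M)^*(M-B)+(M-B)^*(\mathscr{A}_t-M)+\left|M-B\right|^2.\nonumber
\end{equation}
Integrating over $\Omega$ and using linearity of the Gel'fand integral, both cross terms vanish because $\int_\Omega(\mathscr{A}_t-M)\dt=M-M\mu(\Omega)=0$; here I use that integration commutes with taking adjoints and that $\mu$ is a probability measure. This yields \eqref{nulto}. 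Positivity of $\left|M-B\right|^2$ then gives the inequality in \eqref{prvo} immediately, and the final equality in \eqref{prvo} follows by expanding $\left|\mathscr{A}_t-M\right|^2$ and integrating, via $\int_\Omega\mathscr{A}_t^*M\dt=M^*M=|M|^2$ and $\int_\Omega M^*\mathscr{A}_t\dt=|M|^2$.

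For \eqref{drugo}, set $P:=\int_\Omega\left|\mathscr{A}_t-M\right|^2\dt\ge0$ and $Q:=\left|M-B\right|^2\ge0$, so that \eqref{nulto} reads $\int_\Omega\left|\mathscr{A}_t-B\right|^2\dt=P+Q$. As $P+Q\ge0$ its modulus is itself, whence $\left|P+Q\right|^\theta=(P+Q)^\theta$; choosing $B=M$ gives $Q=0$ and attains the value $\llu P^\theta\rru$, which by \eqref{prvo} coincides with both expressions on the right of \eqref{drugo} (both being $\llu|P|^\theta\rru$ for the positive operator $P$). It therefore remains only to show $\llu P^\theta\rru\le\llu(P+Q)^\theta\rru$ for every admissible $B$, i.e. that $B=M$ is the minimizer.

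The only obstacle is precisely this last inequality for $\theta>1$: one cannot route it through the operator inequality $P^\theta\le(P+Q)^\theta$, since $t\mapsto t^\theta$ is not operator monotone in that range. I would instead pass to singular values. From $0\le P\le P+Q$, Weyl's monotonicity principle (see \cite{bhatijinaknjiga}) gives $s_n(P)\le s_n(P+Q)$ for every $n$, and since $t\mapsto t^\theta$ is increasing on $[0,\infty)$ this upgrades to $s_n(P^\theta)=s_n(P)^\theta\le s_n(P+Q)^\theta=s_n((P+Q)^\theta)$, valid for \emph{all} $\theta>0$. Monotonicity of the symmetric gauge function $\Phi$ associated to $\lluo\cdot\rruo$ then yields $\llu P^\theta\rru=\Phi(\{s_n(P^\theta)\})\le\Phi(\{s_n((P+Q)^\theta)\})=\llu(P+Q)^\theta\rru$ (the estimate being trivially valid, with both sides infinite, if $\llu P^\theta\rru=\infty$), which establishes \eqref{drugo} and completes the proof.
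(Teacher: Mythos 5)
Your proposal is correct and follows essentially the same route as the paper: the same orthogonal decomposition around the mean $\int_\Omega\mathscr{A}_t\dt$ with vanishing cross terms for \eqref{nulto}, and the same passage through singular values ($0\le A\le B$ gives $s_n^\theta(A)\le s_n^\theta(B)$, hence the gauge-function inequality) to obtain \eqref{drugo} for all $\theta>0$. The only cosmetic difference is that you verify the last identity in \eqref{prvo} by direct expansion where the paper cites Lemma 2.1 of \cite{JOC}.
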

Thus, the considered minimum is always obtained for
$B=\int_\Omega\AAA_t \dt$.

\begin{proof}
The expression in (\ref{nulto}) equals
\begin{eqnarray}
\lefteqn{ \int_\Omega\left|\mathscr{A}_t-B\right|^2 \dt  =
  \int_\Omega\left|\mathscr{A}_t-\int_\Omega\AAA_t \dt+\int_\Omega\AAA_t \dt-B\right|^2\dt=}\nonumber\\
&=& \int_\Omega\left|\mathscr{A}_t-\int_\Omega\AAA_t\dt\right|^2 \dt
 + \int_\WWW\lla \int_\WWW\mathscr{A}_t\dt-B\rra^2 \dt\nonumber\\
&+&2\Re \int_\Omega\llm \mathscr{A}_t-\int_\Omega\AAA_t\dt\rrm^*
\llm\int_\Omega\AAA_t \dt-B\rrm\dt\nonumber\\
&=& \int_\Omega\left|\mathscr{A}_t-\int_\Omega\AAA_t\dt\right|^2 \dt
+\lla\int_\Omega\mathscr{A}_t\dt-B\rra^2,
\nonumber
\end{eqnarray}
as $\displaystyle\int_\Omega\llm
\mathscr{A}_t-\int_\Omega\AAA_t\dt\rrm^{*}\llm\int_\Omega\AAA_t
\dt-B\rrm\dt =$ $$=\llm\int_\Omega
\mathscr{A}_t^*\dt-\int_\Omega\AAA_t^*\dt\rrm\llm\int_\Omega\AAA_t
\dt-B\rrm=0.$$

Inequality in  (\ref{prvo}) follows from (\ref{nulto}), while
identity in \eqref{prvo} is just a
 a special case of  Lemma 2.1 in \cite{JOC} applied for $k=1$ and $\delta_1=\Omega$.

As $0\le A\le B$ for $A,B\in\cci$ implies $ s_n^\theta(A)\le
s_n^\theta(B)$ for all $n\inN$, as well as $\llu A^\theta\rru\le
\llu B^\theta\rru,$ then (\ref{drugo}) follows.
\end{proof}

\section{Main results}

Let us recall that for a pair of random real variables $(Y,Z)$ its
coefficient  of correlation
$$\rho_{Y,Z}=\frac{\lla E(YZ)-E(Y)E(Z)\rra}{\sigma(Y)\sigma(Z)}=
             \frac{\lla E(YZ)-E(Y)E(Z)\rra}{
\sqrt{E(Y^2)-E^2(Y)} \sqrt{E(Z^2)-E^2(Z)}}$$ always satisfies
$|\rho_{Y,Z}|\le 1.$

For square integrable functions $f$ and $g$ on $[0,1]$ and
$D(f,g)=\int_0^1f(t)g(t)\,d t-
        \int_0^1f(t)\,d t\int_0^1g(t)\,d t$
Landau proved (see  \cite{lan1,lan2})
$$ \lla D(f,g)\rra\le \sqrt{D(f,f)D(g,g)},$$
and the  following theorem is a generalization of these facts to
\ipt{} transformers.

\begin{theorem}[Landau type inequality for \ipt  transformers in u.i. norm ideals]
\label{normalnisluchaj} If $\mu$ is a probability measure on
$\Omega$, let both fields $(\AAA_t)_{t\in\Omega}$ and
 $(\BBB_t)_{t\in\Omega}$ be in $L^2(\Omega,\mu,\BH)$
 consisting of commuting normal operators
and let
$$\sqrt{\,\int_\Omega|\AAt|^2 \dt-\left|\int_\Omega\AAt \dt\right|^2}X
\sqrt{\,\int_\Omega|\BBt|^2 \dt-\left|\int_\Omega\BBt
\dt\right|^2}\in \ccu$$ for some $X\in\BH$. Then $$\int_\Omega
\AAA_tX\BBB_td\mu(t)-\int_\Omega\AAt \dt X\!\!\int_\Omega\BBt \dt
\in\ccu$$ and
\begin{multline}
\llu\int_\Omega \AAA_t X\BBB_td\mu(t)- \int_\Omega\AAt \dt X\!
\!\int_\Omega\BBt \dt
\rru\\
\leq\llu
  \sqrt{\,\int_\Omega|\AAt|^2 \dt-\left|\int_\Omega\AAt \dt\right|^2}X
\sqrt{\,\int_\Omega|\BBt|^2 \dt-\left|\int_\Omega\BBt \dt\right|^2}
\rru.\label{21}
\end{multline}
\end{theorem}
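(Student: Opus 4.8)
The plan is to deduce the inequality from the Cauchy--Schwarz inequality~(1.5) for commuting normal fields (Theorem~3.2 in \cite{JOC}) by replacing $(\AAt)$ and $(\BBt)$ with their \emph{centred} versions. First I would set
\[
\widetilde{\AAA}_t:=\AAt-\int_\WWW\AAt\dt,\qquad
\widetilde{\BBB}_t:=\BBt-\int_\WWW\BBt\dt ,
\]
and record the algebraic reduction that turns the Landau difference into a single i.p.t.\ transformer. A direct expansion, using that $\mu$ is a probability measure so that $\int_\WWW\AAt\dt$ and $\int_\WWW\BBt\dt$ may be pulled out as constants and the three correction terms reduce to a single copy of $\int_\WWW\AAt\dt\,X\!\int_\WWW\BBt\dt$, gives
\[
\int_\WWW\widetilde{\AAA}_tX\widetilde{\BBB}_t\dt
=\int_\WWW\AAt X\BBt\dt-\int_\WWW\AAt\dt\,X\!\int_\WWW\BBt\dt .
\]
Thus the operator on the left of~(\ref{21}) is precisely the i.p.t.\ transformer of the centred fields evaluated at $X$, and it suffices to estimate $\lluo\int_\WWW\widetilde{\AAA}_tX\widetilde{\BBB}_t\dt\rruo$.

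The step I expect to be the main obstacle is verifying that the centred fields are again commuting normal, which is exactly what licenses the application of~(1.5). By the Fuglede--Putnam theorem a commuting family of normal operators commutes with the adjoints of its members, so $\{\AAt,\AAt^{*}:t\in\WWW\}$ generates a \emph{commutative} von Neumann algebra $\mathscr N$, in which every element is normal. I would then check that the mean $\int_\WWW\AAt\dt$ lies in $\mathscr N$: for any $T\in\mathscr N'$ and $f,g\in\HH$, the defining relation of the Gel'fand integral (Lemma~\ref{novadefinicionalema}) together with $T\AAt=\AAt T$ gives
\[
\left<\left(\int_\WWW\AAt\dt\right)Tf,g\right>
=\int_\WWW\left<\AAt f,T^{*}g\right>\dt
=\left<T\left(\int_\WWW\AAt\dt\right)f,g\right>,
\]
so $\int_\WWW\AAt\dt$ commutes with $\mathscr N'$ and therefore belongs to $\mathscr N''=\mathscr N$. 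Consequently each $\widetilde{\AAA}_t\in\mathscr N$, so $(\widetilde{\AAA}_t)$ is still a commuting family of normal operators, and likewise for $(\widetilde{\BBB}_t)$; membership of both centred fields in $L^2(\WWW,\mu,\BH)$ is immediate since the subtracted means are constant operators and $\mu$ is finite.

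With this in hand I would apply~(1.5) to the centred fields to obtain
\[
\llu\int_\WWW\widetilde{\AAA}_tX\widetilde{\BBB}_t\dt\rru
\le\llu\sqrt{\int_\WWW\widetilde{\AAA}_t^{*}\widetilde{\AAA}_t\dt}\;X\;
\sqrt{\int_\WWW\widetilde{\BBB}_t^{*}\widetilde{\BBB}_t\dt}\rru ,
\]
and then rewrite the radicands by the variance identity~(\ref{prvo}) of Lemma~\ref{optlema}: since $\widetilde{\AAA}_t^{*}\widetilde{\AAA}_t=\left|\AAt-\int_\WWW\AAt\dt\right|^{2}$, its integral equals $\int_\WWW|\AAt|^{2}\dt-\left|\int_\WWW\AAt\dt\right|^{2}$, and similarly for $\BBB$. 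Substituting these two expressions reproduces exactly the right-hand side of~(\ref{21}). Finally, the membership $\int_\WWW\AAt X\BBt\dt-\int_\WWW\AAt\dt\,X\!\int_\WWW\BBt\dt\in\ccu$ and the validity of the bound for the merely bounded operator $X\in\BH$ both follow from the source Theorem~3.2 in \cite{JOC}, whose Cauchy--Schwarz estimate places the left-hand operator in $\ccu$ with the displayed norm bound as soon as its right-hand side lies in $\ccu$, which is the standing hypothesis.
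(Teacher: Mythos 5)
Your argument is correct, but it takes a genuinely different route from the paper's. The paper represents the Landau difference by the Korkine-type symmetrization
\[
\int_\Omega \AAA_tX\BBB_t\dt-\int_\Omega\AAt\dt\,X\!\!\int_\Omega\BBt\dt
=\frac12\int_{\Omega^2}(\AAA_s-\AAA_t)X(\BBB_s-\BBB_t)\,d(\mu\times\mu)(s,t)
\]
(identity \eqref{22}) and applies the Cauchy--Schwarz inequality for \ipt transformers (Theorem 3.2 in \cite{JOC}) to the doubled fields $(\AAA_s-\AAA_t)_{(s,t)\in\Omega^2}$ and $(\BBB_s-\BBB_t)_{(s,t)\in\Omega^2}$ over $(\Omega^2,\mu\times\mu)$; for those fields commutativity and normality are immediate, since differences of members of a commuting normal family stay inside the commutative $*$-algebra the family generates, and the radicands are then converted by \eqref{23}. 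You instead use the centred representation $\int_\Omega\widetilde{\AAA}_tX\widetilde{\BBB}_t\dt$ over the original measure space, which is a legitimately different choice of representation of the same \ipt transformer. What your route costs is the extra verification that the Gel'fand mean $\int_\Omega\AAt\dt$ lies in the commutative von Neumann algebra generated by the field --- your Fuglede--Putnam plus double-commutant argument is sound and is genuinely needed, since without it the centred field is not obviously a commuting normal family. What it buys is that you avoid the product measure and the passage to $\Omega^2$, you invoke the variance identity \eqref{prvo} of Lemma \ref{optlema} directly rather than through \eqref{23}, and the role of the mean as the optimal centring becomes visible. Your algebraic expansion of the centred transformer (valid because $\mu$ is a probability measure) and the membership conclusion $\int_\Omega\AAA_tX\BBB_t\dt-\int_\Omega\AAt\dt\,X\!\!\int_\Omega\BBt\dt\in\ccu$, inherited from the hypothesis on the right-hand side via the cited theorem, are both handled correctly.
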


\begin{proof}
First we note that we have the following Korkine type identity for
\ipt transformers:
\begin{eqnarray}
\nonumber && \int_\Omega \AAA_tX\BBB_td\mu(t)\!-\!\int_\Omega\AAt
\dt X\! \!     \int_\Omega\BBt \dt
\!\!\\
\nonumber&&\lefteqn{=\!\!\int_\Omega d\mu(s)\int_\Omega \AAA_tX\BBB_t \dt\!-\!\int_\Omega\!\int_\Omega \AAA_tX\BBB_s\,d\mu(s)d\mu(t)}\\
\!\!&&=\!\!\dfrac12\int_{\Omega^2}(\AAA_s-\AAA_t)X(\BBB_s-\BBB_t)d(\mu\times\mu)(s,t).\label{22}
\end{eqnarray}

In this representation we have $(\AAA_s-\AAA_t)_{(s,t)\in\Omega^2}$
and $(\BBB_s-\BBB_t)_{(s,t)\in\Omega^2}$ to be in
$L^2(\Omega^2,\mu\times\mu,\BH)$ because by  an application of the
identity (\ref{22}),

\begin{eqnarray}
\nonumber
\dfrac12\int_{\Omega^2}\left|\AAA_s-\AAA_t\right|^2d(\mu\times\mu)(s,t)&=&\int_\Omega|\AAt|^2
\dt-
\left|\int_\Omega\AAt \dt\right|^2\\
&=&\int_\Omega\left|\AAA_t-\int_\Omega\AAt \dt\right|^2 \dt
\in\BH.\label{23}
\end{eqnarray}

Both families $(\AAA_s-\AAA_t)_{(s,t)\in\Omega^2}$ and
$(\BBB_s-\BBB_t)_{(s,t)\in\Omega^2}$ consist of commuting normal
operators and by Theorem 3.2 in \cite{JOC}
 $$\dfrac12\int_{\Omega^2}(\AAA_s-\AAA_t)X(\BBB_s-\BBB_t)d(\mu\times\mu)(s,t)\in \ccc_{\lluo\cdot|
\rruo}(\HH)\qquad\mbox{and}$$

\begin{eqnarray*}
\lefteqn{ \llu             \int_\Omega \AAA_tX\BBB_t \dt-
\int_\Omega\AAt \dt X\!\!\int_\Omega\BBt \dt
\rru}\\
&=&\llu\dfrac12\int_{\Omega^2}(\AAA_s-\AAA_t)X(\BBB_s-\BBB_t)d(\mu\times\mu)(s,t)\rru\\
&\le&\llu\sqrt{\,\dfrac12\int_{\Omega^2}|\AAA_s-\AAA_t|^2d(\mu\times\mu)(s,t)}
X\sqrt{\,\dfrac12\int_{\Omega^2}|\BBB_s-\BBB_t|^2d(\mu\times\mu)(s,t)}\rru\\
&=&\llu\sqrt{\,\int_\Omega|\AAt|^2 \dt-\left|\int_\Omega\AAt
\dt\right|^2} X\sqrt{\,\int_\Omega|\BBt|^2 \dt-\left|\int_\Omega\BBt
\dt\right|^2}\rru,
\end{eqnarray*}
due to identities (\ref{22}) and (\ref{23}). And so the conclusion
(\ref{21}) follows.
\end{proof}

\begin{lemma}
Let $\mu$ (resp. $\nu$) be  a probability measure on $\Omega$ (resp.
$\mho$), let both families
$\{\AAA_s,\CCC_t\}_{(s,t)\in\Omega\times\mho}$ and
$\{\BBB_s,\DDD_t\}_{(s,t)\in\Omega\times\mho}$
 consist of commuting normal operators
and let
\begin{eqnarray*}&&\sqrt{\,\int_\Omega|\AAA_s|^2d\mu(s)
\int_\mho|\CCC_t|^2d\nu(t)-\left|\int_\Omega\AAA_sd\mu(s)\int_\mho\CCC_td\nu(t)\right|^2}\cdot X\cdot\\
&&\sqrt{\,\int_\Omega|\BBB_s|^2d\mu(s)
\int_\mho|\DDD_t|^2d\nu(t)-\left|\int_\Omega\BBB_sd\mu(s)\int_\mho\DDD_td\nu(t)\right|^2}
\end{eqnarray*}
be in $\ccu$ for some $X\in\BH$.
Then \begin{eqnarray*}&&\int_\Omega \int_\mho\AAA_s
\CCC_tX\BBB_s\DDD_t\,d\mu(s)\,d\nu(t) -\\&-&\int_\Omega\AAA_s
\,d\mu(s)\int_\mho\CCC_t\,d\nu(t) X\!\!\int_\Omega\BBB_s \,d\mu(s)
\int_\mho\DDD_t\,d\nu(t) \in\ccu\end{eqnarray*}
and
$$
\llu \!\int_\Omega \!\int_\mho\AAA_s
\CCC_tX\BBB_s\DDD_t\,d\mu(s)\,d\nu(t) -\!\int_\Omega\AAA_s
\,d\mu(s)\!\int_\mho\CCC_t\,d\nu(t) X\!\!\int_\Omega\BBB_s
\,d\mu(s)\kern-2pt \int_\mho\DDD_t\,d\nu(t) \rru $$
\begin{eqnarray}
&  \leq&\llu \sqrt{\!\int_\Omega|\AAA_s|^2d\mu(s)
        \!\int_\mho|\CCC_t|^2d\nu(t)\!-\!\left|\!\int_\Omega \AAA_s d\mu(s)\!\int_\mho\CCC_t d\nu(t)\right|^2}\right.\right.\right.X\nonumber\\
&\cdot& \left.\left.\left.\sqrt{\!\int_\Omega|\BBB_s|^2d\mu(s)
        \!\int_\mho|\DDD_t|^2d\nu(t)\!-\!\left|\!\int_\Omega \BBB_s d\mu(s)\!\int_\mho\DDD_t d\nu(t)\right|^2}\rru.
\nonumber
\end{eqnarray}
\end{lemma}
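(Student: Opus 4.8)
The plan is to reduce the statement to Theorem \ref{normalnisluchaj} by transferring everything to the product probability space $(\Omega\times\mho,\,\mu\times\nu)$. First I would introduce the product fields $\mathscr{E}_{(s,t)}:=\AAA_s\CCC_t$ and $\mathscr{F}_{(s,t)}:=\BBB_s\DDD_t$, indexed by $(s,t)\in\Omega\times\mho$. Because the whole family $\{\AAA_s,\CCC_t\}$ consists of commuting normal operators, the Fuglede--Putnam theorem guarantees that the adjoints commute across it as well, so $\{\AAA_s,\AAA_s^*,\CCC_t,\CCC_t^*\}$ generates a commutative algebra; consequently each product $\AAA_s\CCC_t$ is normal and any two such products commute. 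Hence $(\mathscr{E}_{(s,t)})$ is a field of commuting normal operators on $\Omega\times\mho$, and the same argument applies to $(\mathscr{F}_{(s,t)})$.

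Next I would evaluate the two Gel'fand integrals that feed into Theorem \ref{normalnisluchaj}. Using Fubini together with the cyclicity of the trace in the defining relation \eqref{geljfandovintegral}, one checks the factorization
\begin{equation*}
\int_{\Omega\times\mho}\mathscr{E}_{(s,t)}\,d(\mu\times\nu)=\int_\Omega\AAA_s\,d\mu(s)\int_\mho\CCC_t\,d\nu(t),
\end{equation*}
which needs only the trace identity, and then, invoking commutativity to write $|\AAA_s\CCC_t|^2=\CCC_t^*\AAA_s^*\AAA_s\CCC_t=|\AAA_s|^2|\CCC_t|^2$, the analogous
\begin{equation*}
\int_{\Omega\times\mho}|\mathscr{E}_{(s,t)}|^2\,d(\mu\times\nu)=\int_\Omega|\AAA_s|^2\,d\mu(s)\int_\mho|\CCC_t|^2\,d\nu(t).
\end{equation*}
The same two identities hold for $\mathscr{F}$ with $\BBB,\DDD$ in place of $\AAA,\CCC$. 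The well-definedness of the displayed square-root product in the hypothesis already forces all four integrals to be bounded operators, so $(\mathscr{E}_{(s,t)})$ and $(\mathscr{F}_{(s,t)})$ lie in $L^2(\Omega\times\mho,\mu\times\nu,\BH)$ and the hypotheses of Theorem \ref{normalnisluchaj} are met, with the assumption on $\ccu$ being exactly that theorem's requirement transcribed through these identities.

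Finally I would apply Theorem \ref{normalnisluchaj} to $(\mathscr{E}_{(s,t)})$ and $(\mathscr{F}_{(s,t)})$ on $(\Omega\times\mho,\mu\times\nu)$. Substituting the four factorization identities into \eqref{21} turns its left-hand side into $\int_\Omega\int_\mho\AAA_s\CCC_tX\BBB_s\DDD_t\,d\mu\,d\nu-\int_\Omega\AAA_s\,d\mu\int_\mho\CCC_t\,d\nu\,X\int_\Omega\BBB_s\,d\mu\int_\mho\DDD_t\,d\nu$, and its right-hand side into precisely the claimed bound, so the lemma follows. I expect the main obstacle to be the normality/commutativity bookkeeping of the first step: establishing $|\AAA_s\CCC_t|^2=|\AAA_s|^2|\CCC_t|^2$ genuinely uses that the two subfamilies commute across each other together with Fuglede--Putnam, and it is this identity --- rather than the concluding substitution --- that makes the clean product form of the variance possible.
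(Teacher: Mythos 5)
Your proposal is correct and follows essentially the same route as the paper: apply Theorem \ref{normalnisluchaj} to the product families $(\AAA_s\CCC_t)$ and $(\BBB_s\DDD_t)$ over the probability measure $\mu\times\nu$ on $\Omega\times\mho$, using the factorization of $\int\AAA_s\CCC_t\,d(\mu\times\nu)$ and $\int|\AAA_s\CCC_t|^2\,d(\mu\times\nu)$ into products of the separate integrals. Your extra care in justifying normality and the identity $|\AAA_s\CCC_t|^2=|\AAA_s|^2|\CCC_t|^2$ via commutativity and Fuglede--Putnam is a detail the paper leaves implicit.
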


\begin{proof}
Apply Theorem \ref{normalnisluchaj} to the probability measure
$\mu\times\nu$ on $\WWW\times\mho$ and families
 $(\AAA_s\CCC_t)_{(s,t)\in\Omega\times\mho}$ and
$(\BBB_s\DDD_t)_{(s,t)\in\Omega\times\mho}$ of normal commuting
operators in $L_G^2(\Omega\times\mho,d\mu\times\nu,\BH),$ taking in
account that
\begin{eqnarray*} \int_{\WWW\times\mho}\AAA_s \CCC_t\,d(\mu\times\nu)(s,t)
&=&\int_\Omega\AAA_s \,d\mu(s)\!\int_\mho\CCC_t\,d\nu(t),\\
   \int_{\WWW\times\mho}\BBB_s \DDD_t\,d(\mu\times\nu)(s,t)
&=&\int_\Omega\BBB_s
\,d\mu(s)\!\int_\mho\DDD_t\,d\nu(t),\end{eqnarray*} and similarly
\begin{eqnarray*} \int_{\WWW\times\mho}\lla\AAA_s\CCC_t\rra^2d(\mu\times\nu)(s,t)
&=&\int_\Omega\lla\AAA_s\rra^2d\mu(s)\int_\mho\lla\CCC_t\rra^2d\nu(t),\\
   \int_{\WWW\times\mho}\lla\BBB_s \DDD_t\rra^2d(\mu\times\nu)(s,t)
&=&\int_\Omega\lla\BBB_s
\rra^2d\mu(s)\int_\mho\lla\DDD_t\rra^2d\nu(t).\end{eqnarray*}
\end{proof}

By the use of the mathematical induction, the previous lemma enables
us to get straightforwardly the following
\begin{corollary}
If $\mu$, $(\AAA_t)_{t\in\Omega}$ and
 $(\BBB_t)_{t\in\Omega}$ are as in Theorem \ref{normalnisluchaj},
then for all $n\in\mathbb N$ and for any
 $(*_1,\cdots,*_{2n})\in\{*,1\}^{2n}$,
\begin{eqnarray}
\lefteqn{
    \llu\int_{\Omega^n}\prod_{k=1}^n \AAA_{t_k}^{*_k}X
   \prod_{k=1}^n \BBB_{t_{n+k}}^{*_{n+k}}    \prod_{k=1}^{n} d\mu(t_k)\rrr.\rrr.\rrr.}\nonumber\\
& - &\lll.\lll.\lll.  \llm\int_{\Omega}\AAA_{t}^*\dt\rrm^i
 \llm\int_{\Omega}\AAA_{t}\dt\rrm^{n-i}X
 \llm\int_{\Omega}\BBB_{t}^*\dt\rrm^j
 \llm\int_{\Omega}\BBB_{t}\dt\rrm^{n-j}\rru\nonumber
\end{eqnarray}
$$\le
\llu \llm\int_\Omega|\AAt|^2 \dt-\left|\int_\Omega\AAt
\dt\right|^2\rrm^{\frac{n}2}X \llm\int_\Omega|\BBB_t|^2
\dt-\left|\int_\Omega\BBB_t \dt\right|^2\rrm^{\frac{n}2} \rru,$$

where $i$ (resp. $j$) stands for the cardinality of $\{k\in\mathbb N
\,\,|\, 1\le k\le n \,\,\& \,*_k=*\}$ and (resp. $\{k\in\mathbb N
\,\,|\, 1\le k\le n \,\,\& \,*_{n+k}=*\}$).
\end{corollary}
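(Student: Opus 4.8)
The plan is to prove the inequality by induction on $n$, using the preceding lemma at the inductive step and Theorem \ref{normalnisluchaj} at the base. The first move is to read the left-hand side intrinsically. By the centering (Korkine-type) identity already used in the proof of Theorem \ref{normalnisluchaj}, subtracting the product of the integrals is exactly the passage from each field to its mean-zero part; writing $\mathscr{A}_t^{\langle k\rangle}:=\mathscr{A}_t^{*_k}-\int_\Omega\mathscr{A}_s^{*_k}\,d\mu(s)$ and $\mathscr{B}_t^{\langle n+k\rangle}:=\mathscr{B}_t^{*_{n+k}}-\int_\Omega\mathscr{B}_s^{*_{n+k}}\,d\mu(s)$, the quantity to be estimated is the $n$-fold iterate
\[
\int_{\Omega^n}\Big(\prod_{k=1}^n\mathscr{A}_{t_k}^{\langle k\rangle}\Big)\,X\,\Big(\prod_{k=1}^n\mathscr{B}_{t_k}^{\langle n+k\rangle}\Big)\prod_{k=1}^n d\mu(t_k),
\]
the cross terms disappearing because every centered factor integrates to zero. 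This is the object I will bound by $\llu V_A^{n/2}\,X\,V_B^{n/2}\rru$, where $V_A:=\int_\Omega\abs{\mathscr{A}_t}^2\,d\mu(t)-\abs{\int_\Omega\mathscr{A}_t\,d\mu(t)}^2$ and $V_B$ is defined analogously.

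For $n=1$ this is precisely Theorem \ref{normalnisluchaj} applied to the fields $(\mathscr{A}_t^{*_1})$ and $(\mathscr{B}_t^{*_2})$, which are again commuting and normal. Here I would isolate the one algebraic fact that makes the adjoints invisible on the right: normality of $\mathscr{A}_t$ gives $\abs{\mathscr{A}_t^*}^2=\abs{\mathscr{A}_t}^2$, while $\int_\Omega\mathscr{A}_t\,d\mu(t)$, lying in the commutative von Neumann algebra generated by the $\mathscr{A}$'s, is itself normal, so $\abs{(\int_\Omega\mathscr{A}_t\,d\mu(t))^*}^2=\abs{\int_\Omega\mathscr{A}_t\,d\mu(t)}^2$. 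Hence the single-factor variance equals $V_A$ no matter what $*_1$ is, and likewise for $\mathscr{B}$; this is why the right-hand side depends on $n$ only through the exponents and not on the multi-index of adjoints, i.e.\ not on $i$ and $j$.

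For the step from $n$ to $n+1$ I would write $\Omega^{n+1}=\Omega^n\times\Omega$ and apply the preceding lemma with $\nu=\mu^n$ on $\mho=\Omega^n$, taking there the $n$-fold centered products $\prod_{k=1}^n\mathscr{A}_{t_k}^{\langle k\rangle}$ and $\prod_{k=1}^n\mathscr{B}_{t_k}^{\langle n+k\rangle}$, and on the remaining copy of $\Omega$ the single centered factors $\mathscr{A}^{\langle n+1\rangle}$ and $\mathscr{B}^{\langle 2n+2\rangle}$ (the newly added pair of adjoint choices). All of these sit in the relevant commutative von Neumann algebra, so the commuting-normal hypothesis of the lemma is met and its left-hand side is exactly the $(n+1)$-fold iterate above. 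The decisive computation is the behaviour of the variance produced by the lemma: commuting normality gives $\big|\prod_k\mathscr{A}_{t_k}^{\langle k\rangle}\big|^2=\prod_k\big|\mathscr{A}_{t_k}^{\langle k\rangle}\big|^2$, and over the product measure both the mean and the mean-square factor, so that, each centered factor having mean $0$ and mean-square $V_A$, the lemma's variance collapses to $V_A\cdot V_A^{\,n}=V_A^{\,n+1}$ (its subtracted mean term vanishing identically), and similarly $V_B^{\,n+1}$. Taking square roots converts the lemma's estimate into $\llu V_A^{(n+1)/2}\,X\,V_B^{(n+1)/2}\rru$, completing the induction.

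I expect the main obstacle to be exactly this variance bookkeeping. One must verify that the $n$-fold product of centered fields is again a commuting normal field, so that $\abs{\cdot}^2$ distributes across the product and both means and mean-squares factor over $\mu^n$; and one must keep the centering threaded through every factor, since a naive application of the lemma to the uncentered fields would produce the larger quantity $\big(\int_\Omega\abs{\mathscr{A}_t}^2\,d\mu\big)^{n+1}-\abs{\int_\Omega\mathscr{A}_t\,d\mu}^{2(n+1)}$ in place of the clean power $V_A^{\,n+1}$. Once the mean-zero normalization is built in from the outset, the induction is the routine build-up sketched above, and the adjoint pattern enters only through the star-independence of $V_A$ and $V_B$ recorded in the base case.
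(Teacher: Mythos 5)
Your overall architecture --- induction on $n$ with Theorem \ref{normalnisluchaj} as the base case and the preceding two-measure lemma driving the inductive step, together with the observation that normality makes the single-factor variance independent of the adjoint pattern (which is why $i$ and $j$ do not appear on the right) --- is exactly the route the paper intends, and your base case $n=1$ is sound. The argument breaks, however, at its very first move: the ``centering identity'' you use to rewrite the left-hand side is false for $n\ge 2$. Write $a_k=\mathscr{A}_{t_k}^{*_k}-m_k$ and $b_k=\mathscr{B}_{t_k}^{*_{n+k}}-m_k'$ with $m_k,m_k'$ the corresponding means, and expand
\[
\int_{\Omega^n}\prod_{k=1}^n(a_k+m_k)\,X\,\prod_{k=1}^n(b_k+m_k')\,d\mu^n .
\]
The cross term indexed by the sets $S$ (of centered positions left of $X$) and $T$ (right of $X$) vanishes precisely when $S\ne T$, i.e.\ when some variable $t_k$ is centered on exactly one side; but when $k\in S\cap T$ the factor $\int a_k\,(\cdot)\,b_k\,d\mu(t_k)$ does \emph{not} vanish even though $\int a_k\,d\mu=0$. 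Hence the quantity to be estimated equals a sum over all nonempty $S\subseteq\{1,\dots,n\}$ of partially centered integrals, of which your fully centered $n$-fold integral is only the single term $S=\{1,\dots,n\}$. Concretely, take $\mathcal H=\mathbb C$, $X=1$, $n=2$, all $*_k=1$, and $\mathscr{A}_t=\mathscr{B}_t=f(t)=1+g(t)$ with $\int g\,d\mu=0$, $\int g^2\,d\mu=v>0$: the left-hand side is $(\int f^2\,d\mu)^2-(\int f\,d\mu)^4=2v+v^2$, while your fully centered integral is $\int g(t_1)^2g(t_2)^2\,d\mu^2=v^2$; the discrepancy $2v$ is exactly the surviving cross terms.

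The same example shows the clean power $V_A^{n/2}$ cannot be reached along these lines, since $2v+v^2\not\le v^2$. What induction on the lemma actually delivers --- the bound your own ``naive application'' remark correctly identifies --- has $\bigl((\int_\Omega|\AAt|^2\dt)^n-|\int_\Omega\AAt\dt|^{2n}\bigr)^{1/2}$ in place of $\bigl(\int_\Omega|\AAt|^2\dt-|\int_\Omega\AAt\dt|^2\bigr)^{n/2}$ (likewise for $\BBB$); since $a^n-b^n\ge(a-b)^n$ for commuting positive operators $b\le a$, this is the weaker but attainable estimate (it is an equality in the scalar example above). So the gap is not mere bookkeeping: the centering device, which is the one ingredient that would have produced the sharper right-hand side as stated, is precisely the step that fails, and without it the induction only yields the larger variance of the product field.
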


For the Schatten $p$-norms $\|\cdot\|_p$ normality and commutativity
conditions can be dropped, at the  inevitable expense of the
simplicity for its formulation. So we have the following

\begin{theorem}[Landau type inequality for \ipt  transformers in Schatten ideals]
Let $\mu$ be a probability measure on $\Omega$, let
$(\AAA_t)_{t\in\Omega}$ and $(\BBB_t)_{t\in \Omega}$ be
$\mu$-weak${}^*$ measurable families of bounded Hilbert space
operators such that
$$\int_\Omega\left(\|\AAA_tf\|^2+\|\AAA_t^*f\|^2+\|\BBB_tf\|^2+\|\BBB_t^*f\|^2\right)d\mu(t)<\infty\qquad
\textrm{ \rm for all $f\in\HH$}$$ and let $p,q,r\ge1$ such that
$\dfrac1p=\dfrac1{2q}+\dfrac1{2r}\,$. Then for all
$X\in\ccc_p(\HH)$,

\begin{eqnarray}
\label{grussp}&&\lefteqn{
           \lln\int_\Omega \AAA_tX\BBB_t \dt-      \int_\Omega\AAt \dt   X \int_\Omega\BBt \dt\rrn_p}\\
         &\leqslant&\kern-4pt\lln
\left(\int_\Omega\left|\left(\int_\Omega\left|\AAA_t^*-\int_\Omega\AAt^*
\dt \right|^2 \dt   \right)^{\frac{q-1}2}
\left(\AAA_t-\int_\Omega\AAt \dt\right)\right|^2
\dt\right)^{\frac1{2q}}\rrr.\nonumber\end{eqnarray}
\begin{equation*} X\lll.\left(\int_\Omega\left|\left(\int_\Omega\left|\BBB_t-\int_\Omega\BBt\dt
\right|^2 \dt   \right)^{\frac{r-1}2}
\left(\BBB_t^*-\int_\Omega\BBt^* \dt\right)\right|^2
\dt\right)^{\frac1{2r}}\rrn_p.
\end{equation*}

\end{theorem}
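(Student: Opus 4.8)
The plan is to reduce the claim to the general Schatten-$p$ Cauchy--Schwarz inequality for inner product type transformers (Theorem 3.3 in \cite{JOC}), in exactly the same way Theorem \ref{normalnisluchaj} was reduced to its commuting-normal counterpart; the only new work is bookkeeping the weights that appear in the non-commutative estimate. First I would invoke the Korkine type identity (\ref{22}), which uses neither normality nor commutativity, to rewrite the transformer on the left of (\ref{grussp}) as $\frac12\int_{\Omega^2}(\AAA_s-\AAA_t)X(\BBB_s-\BBB_t)\,d(\mu\times\mu)(s,t)$. The hypothesis $\int_\Omega(\|\AAA_tf\|^2+\|\AAA_t^*f\|^2+\|\BBB_tf\|^2+\|\BBB_t^*f\|^2)\dt<\infty$ says precisely that $\AAA,\AAA^*,\BBB,\BBB^*$ all lie in $L^2_G(\Omega,\mu,\BH)$; since $\|(\AAA_s-\AAA_t)f\|^2\le 2\|\AAA_sf\|^2+2\|\AAA_tf\|^2$ (and similarly for the adjoints and for $\BBB$), the derived fields $U_{s,t}:=\AAA_s-\AAA_t$ and $V_{s,t}:=\BBB_s-\BBB_t$ are doubly square integrable on $(\Omega^2,\mu\times\mu)$, so Theorem 3.3 in \cite{JOC} applies to them.

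Next I would apply that theorem with the exponents $p,q,r$ satisfying $\frac1p=\frac1{2q}+\frac1{2r}$, which yields
\[
\lln\int_{\Omega^2}U_{s,t}\,X\,V_{s,t}\,d(\mu\times\mu)(s,t)\rrn_p\le\lln \Phi_\AAA\, X\,\Phi_\BBB\rrn_p,
\]
where $\Phi_\AAA=\bigl(\int_{\Omega^2}\bigl|(\int_{\Omega^2}|U^*|^2)^{\frac{q-1}2}U\bigr|^2\bigr)^{\frac1{2q}}$ and $\Phi_\BBB=\bigl(\int_{\Omega^2}\bigl|(\int_{\Omega^2}|V|^2)^{\frac{r-1}2}V^*\bigr|^2\bigr)^{\frac1{2r}}$; note that the \emph{adjoint} squares enter the weight of the left factor while the \emph{non-adjoint} increment $U$ enters its bare part, and symmetrically for $V$. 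The remaining work is to collapse these double integrals back to single integrals. Writing $C:=\int_\Omega\bigl|\AAA_t^*-\int_\Omega\AAt^*\dt\bigr|^2\dt$ and $D:=\int_\Omega\bigl|\BBB_t-\int_\Omega\BBt\dt\bigr|^2\dt$, the identity (\ref{23}) applied to the adjoint field $t\mapsto\AAA_t^*$ gives $\int_{\Omega^2}|U^*|^2\,d(\mu\times\mu)=2C$, and applied to $\BBB$ it gives $\int_{\Omega^2}|V|^2\,d(\mu\times\mu)=2D$. Applying (\ref{23}) once more, now to the fields $t\mapsto(2C)^{\frac{q-1}2}\AAA_t$ and $t\mapsto(2D)^{\frac{r-1}2}\BBB_t^*$ (legitimate because $(2C)^{\frac{q-1}2}(\AAA_s-\AAA_t)$ is the increment of the first field, so the identity is inherited under left multiplication by a fixed operator), collapses the double integrals and produces scalar factors $2^{\frac{q}{2q}}=\sqrt2$ and $2^{\frac{r}{2r}}=\sqrt2$. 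Hence $\Phi_\AAA=\sqrt2\,G_\AAA$ and $\Phi_\BBB=\sqrt2\,G_\BBB$, where $G_\AAA$ and $G_\BBB$ are exactly the two factors on the right-hand side of (\ref{grussp}).

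Combining the pieces finishes the proof: by homogeneity $\lln\Phi_\AAA X\Phi_\BBB\rrn_p=2\lln G_\AAA X G_\BBB\rrn_p$, and the factor $\frac12$ carried out of the Korkine identity cancels the two $\sqrt2$'s, since $\frac12\cdot\sqrt2\cdot\sqrt2=1$; thus the left-hand side of (\ref{grussp}) is bounded by $\lln G_\AAA X G_\BBB\rrn_p$, which is its right-hand side. I expect the only genuinely delicate point to be the correct identification of the weights in Theorem 3.3 in \cite{JOC}---matching $|U^*|^2$ and $U$ in the left factor and $|V|^2$ and $V^*$ in the right one---since any mismatch there would break the exact agreement with the factors $G_\AAA,G_\BBB$; once these are pinned down, the weighted reduction via (\ref{23}) and the ensuing factor arithmetic are routine.
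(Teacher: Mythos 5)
Your proposal is correct and follows essentially the same route as the paper: the Korkine identity \eqref{22}, Theorem 3.3 of \cite{JOC} applied to the increment fields $(\AAA_s-\AAA_t)$ and $(\BBB_s-\BBB_t)$, and three further applications of identity \eqref{23} to collapse the double integrals. The only (cosmetic) difference is that the paper absorbs the factor $\tfrac12$ into the measure $\tfrac12(\mu\times\mu)$ so that no powers of $2$ ever appear, whereas you carry them explicitly and verify that $\tfrac12\cdot\sqrt2\cdot\sqrt2=1$; your weight identifications and the resulting cancellation are both right.
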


\begin{proof}

According to identity (\ref{23}), application of Theorem 3.3 in
\cite{JOC} to   families
$(\mathscr{A}_s-\mathscr{A}_t)_{(s,t)\in\Omega^2}$ and
 $(\mathscr{B}_s-\mathscr{B}_t)_{(s,t)\in\Omega^2}$ gives

\begin{eqnarray}
&&
 \lln\int_\Omega \AAA_tX\BBB_td\mu(t)-\int_\Omega\mathscr{A}_t\dt X\int_\Omega\mathscr{B}_t\dt\rrn_p \nonumber\\
&=&\lln\dfrac12\int_{\Omega^2}(\AAA_s-\AAA_t)X(\BBB_s-\BBB_t)d(\mu\times\mu)(s,t)\rrn_p\le \nonumber
\end{eqnarray}

$$
\lln\left(\dfrac12\int_{\Omega^2}(\mathscr{A}_s^*-\mathscr{A}_t^*)
\left(\dfrac12\int_{\Omega^2}|\mathscr{A}_s^*-\mathscr{A}_t^*|^2(\mu\times\mu)(s,t)\right)^{q-1}\kern-13.1pt
(\mathscr{A}_s-\mathscr{A}_t)d(\mu\times\mu)(s,t)\right)^{\frac1{2q}}\rrr.\kern-10pt
X$$ \begin{equation}\label{pnorm}\end{equation}
$$\lll.\left(\dfrac12\int_{\Omega^2}(\mathscr{B}_s-\mathscr{B}_t)
\Bigl(\dfrac12\int_{\Omega^2}|\mathscr{B}_s-\mathscr{B}_t|^2(\mu\times\mu)(s,t)\Bigr)^{r-1}\kern-8pt
(\mathscr{B}_s^*-\mathscr{B}_t^*)d(\mu\times\mu)(s,t)\right)^{\frac1{2r}}\kern-2pt\rrn_p.$$

By application of identity (\ref{23}) once again, the last
expression in (\ref{pnorm}) becomes

$$\bigg\|\biggl(\dfrac12\int_{\Omega^2}(\mathscr{A}_s-\mathscr{A}_t)^*
\left(\int_\Omega\left|\mathscr{A}_t^*-\int_\Omega \mathscr{A}^*
_t\dt\right|^2\dt\right)^{q-1}
(\mathscr{A}_s-\mathscr{A}_t)d(\mu\times\mu)(s,t)\biggr)^{\frac1{2q}}$$

$$X\biggl(\dfrac12\int_{\Omega^2}(\mathscr{B}_s-\mathscr{B}_t)
\Bigl(\int_\Omega\left|\mathscr{B}_s-\int_\Omega \mathscr{B}
_t\dt\right|^2d\mu(s)\Bigr)^{r-1}\kern-5pt(\mathscr{B}_s-\mathscr{B}_t)^*d(\mu\times\mu)(s,t)\biggr)^{\frac1{2r}}\bigg\|_p.\label{odvizraz}
$$

Denoting
$\Bigl(\int_\Omega\left|\AAA_s^*-\int_\Omega\mathscr{A}^*d\mu\right|^2d\mu(s)\Bigr)^{\frac{p-1}2}$
\kern-6.5pt(resp.
$\Bigl(\int_\Omega\left|\BBB_s-\int_\Omega\mathscr{B}d\mu\right|^2d\mu(s)\Bigr)^{\frac{r-1}2}$)
by $Y$ (resp. $Z$),
then 
the expression in (\ref{pnorm}) becomes

\begin{eqnarray}\label{saYZ}
\biggl\|\left(\dfrac12\int_{\Omega^2}\left|Y\AAA_s-Y\AAA_t\right|^2d(\mu\times\mu)(s,t)\right)^{\frac1{2q}}X\\
\nonumber
\left(\dfrac12\int_{\Omega^2}\left|Z\BBB_s^*-Z\BBB_t^*\right|^2d(\mu\times\mu)(s,t)\right)^{\frac1{2r}}\biggr\|_p.
\end{eqnarray}

By a new application of identity (\ref{23}) to families
$(Y\AAA_t)_{t\in\Omega}$ and $(Z\BBB_t^*)_{t\in\Omega}$ (\ref{saYZ})
becomes

$$\left\|\left(\int_\Omega\left|Y\mathscr{A}_t-\int_\Omega Y\mathscr{A}_t \dt\right|^2d\mu(t)\right)^{\frac1{2q}}\kern-4pt X
\left(\int_\Omega\left|Z\mathscr{B}_t^*-\int_\Omega Z\mathscr{B}_t^*
\dt\right|^2d\mu(t)\right)^{\frac1{2r}}\right\|_p,$$

which obviously equals to the righthand side expression in
(\ref{grussp}).
\end{proof}
A special case of an abstract H\"older inequality presented in
Theorem 3.1.(e) in \cite{JOC}
 gives us

\begin{theorem}[Cauchy-Schwarz  inequality for o.v. functions in u.i. norm ideals]
\label{koshishvarcovacha} Let $\mu$ be a  measure on $\Omega$, let
$(\AAA_t)_{t\in\Omega}$
 and $(\BBB_t)_{t\in \Omega}$ be
$\mu$-weak${}^*$ measurable families of bounded Hilbert space
operators
 such that
$\lla\int_\Omega|\AAA_t|^2\dt\rra^\theta$ and
$\lla\int_\Omega|\BBB_t|^2\dt\rra^\theta$ are in in $\ccu$ for some
$\theta>0$ and for some u.i. norm $\lluo\cdot\rruo.$ Then,
\begin{equation}
           \llu\lla\int_\Omega \AAA_t^*\BBB_t\dt\rra^\theta \rru\le
           \llu\lla\int_\Omega \AAA_t^*\AAA_t\dt\rra^\theta \rru^\frac12
           \llu\lla\int_\Omega \BBB_t^*\BBB_t\dt\rra^\theta \rru^\frac12.
\nonumber
\end{equation}
\end{theorem}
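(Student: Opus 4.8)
The plan is to read the three integrals in the statement as the entries of the Gram operator of the pair of fields for the module inner product introduced above, namely $\int_\Omega\AAA_t^*\BBB_t\dt=\langle\!\langle\AAA,\BBB\rangle\!\rangle$, $\int_\Omega\AAA_t^*\AAA_t\dt=\langle\!\langle\AAA,\AAA\rangle\!\rangle$ and $\int_\Omega\BBB_t^*\BBB_t\dt=\langle\!\langle\BBB,\BBB\rangle\!\rangle$ in $L_G^2(\Omega,\mu,\ccc_\Phi(\HH))$. In these terms the asserted inequality is exactly the Cauchy--Schwarz estimate for this $\ccc_\Phi(\HH)$-valued inner product, measured after taking the $\theta$-th power and the u.i. norm $\lluo\cdot\rruo$. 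Since a general abstract H\"older inequality for such inner product type integrals is available as Theorem 3.1(e) in \cite{JOC}, the most direct route is to quote that result for the conjugate exponents $2$ and $2$, so that the two exponents $1/p,1/q$ occurring in its conclusion both specialize to $1/2$. I would first restate Theorem 3.1(e) in the form needed and then check that its hypotheses hold here: the families are $[\mu]$ weakly$^*$-measurable, and the standing assumption that $\lla\int_\Omega|\AAA_t|^2\dt\rra^\theta$ and $\lla\int_\Omega|\BBB_t|^2\dt\rra^\theta$ lie in $\ccu$ is precisely the membership requirement under which the right-hand side is finite and $\lla\int_\Omega\AAA_t^*\BBB_t\dt\rra^\theta\in\ccu$.

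For a derivation independent of that reference I would instead use Lemma \ref{zatvorenjeogranichen} to represent the fields by operators $S=\vec{\AAA}$ and $T=\vec{\BBB}$ from $\HH$ into $\LDH$, for which $|S|^2=\int_\Omega\AAA_t^*\AAA_t\dt$, $|T|^2=\int_\Omega\BBB_t^*\BBB_t\dt$ and $S^*T=\int_\Omega\AAA_t^*\BBB_t\dt$. The claim then reduces to the purely operator-theoretic inequality $\llu|S^*T|^\theta\rru\le\llu|S|^{2\theta}\rru^{\frac12}\llu|T|^{2\theta}\rru^{\frac12}$, which I would obtain from the multiplicative Horn inequality $\prod_{i\le n}s_i(S^*T)\le\prod_{i\le n}s_i(S)s_i(T)$: raising it to the power $\theta>0$ yields log-submajorization of $\{s_i^\theta(S^*T)\}$ by the non-increasing sequence $\{s_i^\theta(S)s_i^\theta(T)\}$, hence $\llu|S^*T|^\theta\rru\le\Phi(\{s_i^\theta(S)s_i^\theta(T)\})$; one then closes with the Cauchy--Schwarz inequality for the symmetric gauge function, $\Phi(\{a_ib_i\})\le\Phi(\{a_i^2\})^{\frac12}\Phi(\{b_i^2\})^{\frac12}$, applied to $a_i=s_i^\theta(S)$ and $b_i=s_i^\theta(T)$, since $\Phi(\{s_i^{2\theta}(S)\})^{\frac12}=\llu|S|^{2\theta}\rru^{\frac12}$ and likewise for $T$.

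The main obstacle I anticipate is not the majorization algebra but the bookkeeping needed to pass legitimately to this operator picture: one must invoke the closedness and, under the stated membership hypothesis, the boundedness of $\vec{\AAA}$ and $\vec{\BBB}$ from Lemma \ref{zatvorenjeogranichen} to be sure that $|S|^{2\theta},|T|^{2\theta}\in\ccc_\Phi(\HH)$ and that $S^*T$ really is the Gel'fand integral $\int_\Omega\AAA_t^*\BBB_t\dt$. Once this identification is secured, the genuine analytic content is the single operator inequality above, and it is exactly the Cauchy--Schwarz step for the symmetric gauge function that produces the two exponents $\tfrac12$ and explains why the case $p=q=2$ of the abstract H\"older inequality is the one to use.
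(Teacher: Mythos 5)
Your first route is precisely the paper's proof: the entire argument there consists of invoking Theorem 3.1(e) of \cite{JOC} with $\Phi_1=\Phi$, $\Phi_2=\Phi_3=\Phi^{(2)}$, $\alpha=2\theta$ and $X=I$, which is exactly your ``conjugate exponents $2$ and $2$'' specialization, so on that branch you and the paper coincide. Your second, self-contained route is genuinely different and is sound: the identifications $|\vec{\AAA}|^2=\int_\Omega\AAA_t^*\AAA_t\dt$ and $\vec{\AAA}^{\,*}\vec{\BBB}=\int_\Omega\AAA_t^*\BBB_t\dt$ are exactly what Lemma \ref{zatvorenjeogranichen} and the Hilbert-module discussion in the preliminaries provide (the hypothesis $\lla\int_\Omega|\AAA_t|^2\dt\rra^\theta\in\ccu$ forces $\int_\Omega|\AAA_t|^2\dt$ to be a bounded operator, hence $\vec{\AAA}$ is bounded, and the cross term exists by the scalar Cauchy--Schwarz estimate $|\langle\BBB_tf,\AAA_tg\rangle|\le\|\BBB_tf\|\,\|\AAA_tg\|$); after that, Horn's log-submajorization $\prod_{i\le n}s_i(S^*T)\le\prod_{i\le n}s_i(S)s_i(T)$, its stability under the power $\theta>0$, the passage from log-submajorization to weak majorization, and the Cauchy--Schwarz property $\Phi(\{a_ib_i\})\le\Phi(\{a_i^2\})^{\frac12}\Phi(\{b_i^2\})^{\frac12}$ of symmetric gauge functions are all standard and correctly chained. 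What the alternative buys is independence from the unpublished-to-the-reader content of Theorem 3.1(e) of \cite{JOC} and a transparent explanation of where the two exponents $\tfrac12$ come from; what the paper's route buys is brevity and uniformity with the rest of the article, which leans on that abstract H\"older inequality throughout. Either way the statement is established.
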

\begin{proof}
Take   $\Phi$ to be a
     s.g. function that generates u.i. norm $\lluo\cdot\rruo$,
 $\Phi_1=\Phi$,
$\Phi_2=\Phi_3=\Phi^{(2)}$ (2-reconvexization  of $\Phi$),
$\alpha=2\theta$ and $X=I$, and then apply 3.1.(e) from \cite{JOC}.
\end{proof}

Now we can easily derive the following generalization of Landau
inequality  for Gel'fand integrals of o.v. functions:
\begin{theorem}[Landau type inequality for o.v. functions in u.i. norm ideals]
\label{korelacionateorema} If  $\mu$ is  a probability  measure on
$\Omega$, $\theta>0$ and $(\AAA_t)_{t\in\Omega}$
 and $(\BBB_t)_{t\in \Omega}$ are as
   in Theorem \ref{koshishvarcovacha},
then,
\begin{multline}
           \llu\lla\int_\Omega \AAA_t^*\BBB_t\dt
                  -\int_\Omega \AAA_t^*\dt\int_\WWW\BBB_t\dt\rra^\theta \rru^2\le\\
           \llu\lla\int_\Omega \lla\AAA_t\rra^2\dt-\lla\int_\WWW\AAt\dt\rra^2\rra^\theta\rru
           \llu\lla\int_\Omega \lla\BBB_t\rra^2\dt-\lla\int_\WWW\BBt\dt\rra^2\rra^\theta\rru.
\label{korelaciona}
\end{multline}
\end{theorem}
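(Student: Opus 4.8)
The plan is to deduce \eqref{korelaciona} from the Cauchy--Schwarz inequality of Theorem \ref{koshishvarcovacha}, applied to the \emph{centered} fields. Since $\mu$ is a probability measure, I would introduce $\widetilde{\AAA}_t := \AAt - \int_\WWW\AAt\dt$ and $\widetilde{\BBB}_t := \BBt - \int_\WWW\BBt\dt$, which then have vanishing Gel'fand integral over $\WWW$. The first task is purely algebraic: expanding the product $\widetilde{\AAA}_t^*\widetilde{\BBB}_t$, integrating, and using both $\int_\WWW\dt=1$ and the fact that the Gel'fand integral commutes with taking adjoints, I obtain
\[
\int_\WWW \widetilde{\AAA}_t^*\widetilde{\BBB}_t\dt = \int_\WWW\AAt^*\BBt\dt - \int_\WWW\AAt^*\dt\int_\WWW\BBt\dt,
\]
which is exactly the operator sitting inside the norm on the left-hand side of \eqref{korelaciona}. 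For the two diagonal terms, the identity in \eqref{prvo} of Lemma \ref{optlema} (applied with $B=\int_\WWW\AAt\dt$, respectively $B=\int_\WWW\BBt\dt$) gives
\[
\int_\WWW \widetilde{\AAA}_t^*\widetilde{\AAA}_t\dt = \int_\WWW\lla\AAt\rra^2\dt - \lla\int_\WWW\AAt\dt\rra^2,
\]
and the analogous identity for $\widetilde{\BBB}$.

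Before invoking Theorem \ref{koshishvarcovacha} for the centered fields, I must check that they satisfy its standing hypotheses, i.e.\ that $\lla\int_\WWW\widetilde{\AAA}_t^*\widetilde{\AAA}_t\dt\rra^\theta$ and $\lla\int_\WWW\widetilde{\BBB}_t^*\widetilde{\BBB}_t\dt\rra^\theta$ lie in $\ccu$. Here the positivity built into Lemma \ref{optlema} is decisive: inequality \eqref{prvo} yields $0\le \int_\WWW\widetilde{\AAA}_t^*\widetilde{\AAA}_t\dt\le\int_\WWW\lla\AAt\rra^2\dt$, while $\lla\int_\WWW\lla\AAt\rra^2\dt\rra^\theta\in\ccu$ by the assumption on $(\AAt)$. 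The $\theta$-power monotonicity already used at the close of the proof of Lemma \ref{optlema} --- namely that $0\le A\le B$ in $\cci$ forces $s_n^\theta(A)\le s_n^\theta(B)$, hence $A^\theta\in\ccu$ whenever $B^\theta\in\ccu$ --- then places $\lla\int_\WWW\widetilde{\AAA}_t^*\widetilde{\AAA}_t\dt\rra^\theta$ in $\ccu$, and symmetrically for $\widetilde{\BBB}$. This positivity-and-monotonicity bookkeeping is the one step that demands genuine care; everything else is formal.

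With the hypotheses secured, I would apply Theorem \ref{koshishvarcovacha} to $\widetilde{\AAA}$ and $\widetilde{\BBB}$, substitute the three identities of the first paragraph, and square the resulting inequality. Since $\widetilde{\AAA}_t^*\widetilde{\AAA}_t=\lla\widetilde{\AAA}_t\rra^2$ and likewise for $\widetilde{\BBB}$, the right-hand side becomes precisely the product of the two centered-variance norms appearing in \eqref{korelaciona}, while the left-hand side becomes the squared norm of the covariance operator. This reproduces \eqref{korelaciona} verbatim and completes the argument.
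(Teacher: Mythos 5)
Your argument is correct, and it reaches \eqref{korelaciona} by a different decomposition than the one the paper uses. The paper's proof applies Theorem \ref{koshishvarcovacha} to the difference fields $(\AAA_s-\AAA_t)_{(s,t)\in\WWW^2}$ and $(\BBB_s-\BBB_t)_{(s,t)\in\WWW^2}$ on the product space $(\WWW^2,\mu\times\mu)$: the Korkine type identity \eqref{22} represents the covariance operator as $\frac12\int_{\WWW^2}(\AAA_s-\AAA_t)^*(\BBB_s-\BBB_t)\,d(\mu\times\mu)(s,t)$, and identity \eqref{23} converts the resulting right-hand side into the two variance operators. You instead stay on $\WWW$ and apply the same Cauchy--Schwarz theorem to the centered fields $\widetilde{\AAA}_t=\AAA_t-\int_\WWW\AAA_t\dt$ and $\widetilde{\BBB}_t=\BBB_t-\int_\WWW\BBB_t\dt$; your expansion of $\int_\WWW\widetilde{\AAA}_t^{\,*}\widetilde{\BBB}_t\dt$ plays the role of \eqref{22}, and the identity in \eqref{prvo} of Lemma \ref{optlema} plays the role of \eqref{23}. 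Since both representations describe the same covariance operator, each proof is a single invocation of Theorem \ref{koshishvarcovacha}; what yours buys is that it avoids passing to the product measure, and it makes explicit the verification that the new fields satisfy the integrability hypotheses of Theorem \ref{koshishvarcovacha} (via $0\le\int_\WWW|\widetilde{\AAA}_t|^2\dt\le\int_\WWW|\AAA_t|^2\dt$ together with the monotonicity of $A\mapsto\llu A^\theta\rru$ on positive operators, exactly as at the end of the proof of Lemma \ref{optlema}) --- a point the paper's two-line proof leaves implicit for the difference fields. The price is that your route leans on $\mu$ being a probability measure and on the compatibility of the Gel'fand integral with adjoints, both of which you correctly note and which hold here.
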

\begin{proof}
Apply Theorem \ref{koshishvarcovacha}  to o.v. families
$(\AAA_s-\AAA_t)_{(s,t)\in\WWW^2}$ and
$(\BBB_s-\BBB_t)_{(s,t)\in\WWW^2}$ and use identity (\ref{22}) once
again.
\end{proof}

For for the more general inequality in  an arbitrary Hilbert
$C^*$-module see Theorem 3.4 in \cite{I-V}. A case $\theta=1$ and
$\lluo\cdot\lluo=\lln\cdot\rrn$ in Theorem \ref{korelacionateorema}
offers the proof for Hilbert $C^*$-module $L_G^2(\WWW,\dm,\BH)$ in
case of the lifted projection $h(t)=I$ for all $t\in\WWW$.

Case $\theta=1$ and $\lluo\cdot\lluo=\lln\cdot\rrn_1$ of Theorem
\ref{korelacionateorema}
offers the proof for the stronger version of Theorem 3.3 for Hilbert
$H^*$-module $L_G^2(\WWW,\dm,\ccj)$ for the
 same  lifted projection $h(t)=I$ for all $t\in\WWW$.

\begin{corollary}
Under conditions of Theorem \ref{korelacionateorema} we have

$$ \lla\tr\lll(    \int_\Omega \AAA_t^*\BBB_t\dt
                  -\int_\Omega \AAA_t^*\dt\int_\WWW\BBB_t\dt\rrr)\rra^2$$
\begin{eqnarray}&\le&         \lln    \int_\Omega \AAA_t^*\BBB_t\dt
                  -\int_\Omega \AAA_t^*\dt\int_\WWW\BBB_t\dt     \rrn_1^2 \label{druga}
\end{eqnarray}
\begin{eqnarray}
&\le&\left({\int_\Omega \lln\AAA_t\rrn_2^2\dt}-\lln\int_\WWW\AAt\dt\rrn_2^2\right)\cdot\nonumber\\
&\cdot&
   \left({\int_\Omega \lln\BBB_t\rrn_2^2\dt}-\lln\int_\WWW\BBt\dt\rrn_2^2\right)\nonumber\\
&=&\left(\lln\sqrt{\int_\Omega \lla\AAA_t\rra^2\dt}\rrn_2^2-\lln\int_\WWW\AAt\dt\rrn_2^2\right)\cdot\nonumber\\
&\cdot&\left(\lln\sqrt{\int_\Omega
\lla\BBB_t\rra^2\dt}\rrn_2^2-\lln\int_\WWW\BBt\dt\rrn_2^2\right).\nonumber
\label{korelaciona1}
\end{eqnarray}
\end{corollary}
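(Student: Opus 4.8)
The plan is to read the displayed chain as three successive claims and to prove them from left to right. Throughout I set
$$M:=\int_\Omega \AAA_t^*\BBB_t\dt-\int_\Omega \AAA_t^*\dt\int_\WWW\BBB_t\dt,$$
the very operator governed by Theorem \ref{korelacionateorema}; under the hypotheses inherited from that theorem with $\theta=1$ and $\lluo\cdot\rruo=\lln\cdot\rrn_1$, both $\int_\Omega\lla\AAt\rra^2\dt$ and $\int_\Omega\lla\BBt\rra^2\dt$ lie in $\ccj$, so that $M\in\ccj$ and every trace written below is finite. The first inequality I would dispatch immediately from the elementary bound $\lla\tr M\rra\le\lln M\rrn_1$, valid for every trace-class operator: writing the polar decomposition $M=V\lla M\rra$ one has $\lla\tr M\rra=\lla\tr(V\lla M\rra)\rra\le\lln V\rrn_\infty\lln\,\lla M\rra\,\rrn_1=\lln M\rrn_1$, and squaring gives the first line.

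For the middle inequality I would specialize Theorem \ref{korelacionateorema} to $\theta=1$ and to the trace norm $\lln\cdot\rrn_1$. Its left-hand side then reads $\lln\,\lla M\rra\,\rrn_1^2=\lln M\rrn_1^2$. The decisive simplification on the right is that, by the identity in \eqref{prvo} of Lemma \ref{optlema},
$$\int_\Omega\lla\AAt\rra^2\dt-\lla\int_\WWW\AAt\dt\rra^2=\int_\Omega\lla\AAA_t-\int_\WWW\AAt\dt\rra^2\dt\ge0,$$
and likewise for $\BBB$; hence the outer moduli in \eqref{korelaciona} are redundant, and since the trace norm of a positive trace-class operator coincides with its trace, the right-hand side of \eqref{korelaciona} collapses to a product of two ordinary traces.

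It remains to match each of those traces with the scalar quantities in the statement, and this is the step I expect to demand the most care, because the trace must be interchanged with a Gel'fand integral rather than with a Bochner one. Here I would invoke Lemma \ref{novadefinicionalema}, by which $\langle(\int_\Omega\AAt^*\AAt\dt)f,f\rangle=\int_\Omega\lln\AAA_t f\rrn^2\dt$ for every $f\in\HH$; summing over an orthonormal basis $(e_n)$ and applying Tonelli's theorem to the nonnegative integrand $t\mapsto\lln\AAA_t e_n\rrn^2$ yields $\tr\int_\Omega\lla\AAt\rra^2\dt=\int_\Omega\lln\AAA_t\rrn_2^2\dt$, whereas $\tr\lla\int_\WWW\AAt\dt\rra^2=\lln\int_\WWW\AAt\dt\rrn_2^2$ directly; the same identities hold for $\BBB$. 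Substituting these into the product of traces produces the middle inequality. The closing equality I would then read off at once from $\lln\sqrt S\rrn_2^2=\tr S$ for a positive operator $S$, applied to $S=\int_\Omega\lla\AAt\rra^2\dt$ and $S=\int_\Omega\lla\BBt\rra^2\dt$.
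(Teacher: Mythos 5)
Your proposal is correct and follows essentially the same route as the paper: the elementary bound $\lla\tr M\rra\le\lln M\rrn_1$ for the first inequality, Theorem \ref{korelacionateorema} specialized to $\theta=1$ and $\lln\cdot\rrn_1$ for the second, and identification of the resulting trace norms of positive operators with traces and Hilbert--Schmidt norms for the closing equalities. The paper compresses the last step into ``straightforward calculations based on elementary properties of the trace and Gel'fand integrals''; your Tonelli argument via Lemma \ref{novadefinicionalema} simply supplies the detail it omits.
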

\begin{proof}
An application  of (\ref{korelaciona}) for $\theta=1$ and
$\lluo\cdot\lluo=\lln\cdot\rrn_1$ justifies (\ref{prva}), while
(\ref{druga}) and all the remaining identities in
(\ref{korelaciona2}) are obtainable by a straightforward
calculations, based on elementary properties of the trace $\tr$ and
Gel'fand integrals:
\begin{eqnarray}
&&\lln\int_\Omega \AAA_t^*\BBB_t\dt
                  -\int_\Omega \AAA_t^*\dt\int_\WWW\BBB_t\dt \rrn_1^2 \nonumber \\
&\le&\lln\int_\Omega
\lla\AAA_t\rra^2\dt\kern-3pt-\kern-3pt\lla\int_\WWW\AAt\dt\rra^2\rrn_1
     \kern-4pt\lln\int_\Omega \lla\BBB_t\rra^2\dt\kern-3pt-\kern-3pt\lla\int_\WWW\BBt\dt\rra^2\rrn_1\label{prva}\\
 &=&\left(\tr\lll(\int_\Omega \lla\AAA_t\rra^2\dt\rrr)-\tr\lll(\lla\int_\WWW\AAt\dt\rra^2\rrr)\right)\cdot
\nonumber\\
&\cdot&
 \left(\tr\lll(\int_\Omega \lla\BBB_t\rra^2\dt\rrr)-\tr\lll(\lla\int_\WWW\BBt\dt\rra^2\rrr)\right)\nonumber\\
&=&\left({\int_\Omega \lln\AAA_t\rrn_2^2\dt}-\lln\int_\WWW\AAt\dt\rrn_2^2\right)\cdot\nonumber\\
&\cdot&
   \left({\int_\Omega \lln\BBB_t\rrn_2^2\dt}-\lln\int_\WWW\BBt\dt\rrn_2^2\right)\nonumber\\
&=&\left(\lln\sqrt{\int_\Omega \lla\AAA_t\rra^2\dt}\rrn_2^2-\lln\int_\WWW\AAt\dt\rrn_2^2\right)\cdot\nonumber\\
&\cdot&   \left(\lln\sqrt{\int_\Omega
\lla\BBB_t\rra^2\dt}\rrn_2^2-\lln\int_\WWW\BBt\dt\rrn_2^2\right).
\label{korelaciona2}
\end{eqnarray}
\end{proof}

For bounded field of operators $\AAA=(\mathscr{A}_t)_{t\in\Omega}$
one can easily check that  the radius of the smallest disk that
essentially contains its range is
$$r_\iii(\AAA)=\inf_{A\in\BH}\supess_{t\in\WWW}\lln \AAt-A\rrn=
\inf_{A\in\BH}\lln\AAA-A\rrn_\infty=\min_{A\in\BH}\lln\AAA-A\rrn_\infty$$
(from the triangle inequality we have
$\bigl|\|\mathscr{A}_t-A'\|-\|\mathscr{A}_t-A\|\bigr|\leq\|A'-A\|$,
so the mapping $A\to\supess_{t\in\WWW}\lln\AAt-A\rrn$ is nonnegative
and continuous on $\BH$; since $(\mathscr{A}_t)_{t\in\Omega}$ is
bounded field of operators, we also have $\lln\AAt-A\rrn\to\infty$
when $\|A\|\to\infty$, so this mapping attains minimum), and it
actually  attains at some $A_0\in\BH$, which represents a center of
the disk considered. Any such field of operators is of finite
diameter
$$\diam\nolimits_\iii(\AAA)=\supess_{s,t\in\WWW}\lln \AAA_s-\AAt\rrn,$$ with the simple
inequalities $r_\iii(\AAA)\le \diam_\iii(\AAA)\le 2r_\iii(\AAA)$
relating those quantities. For such
 fields of operators we can now state the following stronger
version of Gr\"uss inequality.

\begin{theorem}[Gr\"uss type inequality for \ipt{} transformers in u.i. norm ideals]
\label{th0} Let  $\mu$ be a $\sigma$-finite measure on $\Omega$ and
 let $\AAA=(\mathscr{A}_t)_{t\in\Omega}$ and $\BBB=(\mathscr{B}_t)_{t\in\Omega}$
be $[\mu]$ a.e. bounded  fields of operators. Then for all
$X\in\ccu$,

\begin{multline}
\sup_{\mu(\delta)>0}\llu\frac1{\mu(\delta)}\int_\delta\mathscr{A}_tX\mathscr{B}_t\dt-
\frac1{\mu(\delta)}\int_\delta\mathscr{A}_t\dt \,X
\frac1{\mu(\delta)}\int_\delta\mathscr{B}_t \dt\rru\\
\le\min \llv {r_\iii(\AAA)r_\iii(\BBB)},
             \frac{\diam_\iii(\AAA)\diam_\iii(\BBB)}2\rrv\cdot\lluo X\rruo
\label{oshtrina0}
\end{multline}
(i.e. $\sup$ is taken over all measurable sets
$\delta\subseteq\Omega$ such that $0<\mu(\delta)<\infty$).
\end{theorem}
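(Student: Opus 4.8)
The plan is to reduce to a single set, fix a measurable $\delta\subseteq\Omega$ with $0<\mu(\delta)<\infty$, pass to the probability measure $\mu_\delta:=\frac1{\mu(\delta)}\,\mu(\cdot\cap\delta)$, and bound the single Gr\"uss transformer
$$G_\delta(X):=\int_\delta\AAt X\BBt\,d\mu_\delta(t)-\int_\delta\AAt\,d\mu_\delta(t)\,X\!\int_\delta\BBt\,d\mu_\delta(t)$$
by each of $r_\iii(\AAA)r_\iii(\BBB)\lluo X\rruo$ and $\tfrac12\diam_\iii(\AAA)\diam_\iii(\BBB)\lluo X\rruo$, uniformly in $\delta$; taking the supremum over $\delta$ and then the smaller of the two constants will yield \eqref{oshtrina0}. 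The essential-sup bounds defining $r_\iii$ and $\diam_\iii$ hold $[\mu]$ a.e.\ on $\Omega$, hence $[\mu_\delta]$ a.e.\ on $\delta$ for \emph{every} $\delta$, so the constants do not depend on $\delta$. A preliminary remark I would use repeatedly is that, because $\mu_\delta$ is a probability measure, $G_\delta$ is unchanged if $\AAt$ is replaced by $\AAt-A$ and $\BBt$ by $\BBt-B$ for any fixed $A,B\in\BH$: the cross terms cancel against $\int_\delta d\mu_\delta=1$.

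For the diameter estimate I would apply the Korkine-type identity \eqref{22} on the probability space $(\delta,\mu_\delta)$ --- an algebraic identity needing no normality --- to write
$$G_\delta(X)=\frac12\int_{\delta^2}(\AAA_s-\AAt)X(\BBB_s-\BBt)\,d(\mu_\delta\times\mu_\delta)(s,t).$$
Since $\lluo\cdot\rruo$ is a norm (triangle inequality under the integral sign) enjoying the ideal property $\lluo PYQ\rruo\le\lln P\rrn\,\lluo Y\rruo\,\lln Q\rrn$, and since $\lln\AAA_s-\AAt\rrn\le\diam_\iii(\AAA)$ and $\lln\BBB_s-\BBt\rrn\le\diam_\iii(\BBB)$ hold $[\mu_\delta\times\mu_\delta]$ a.e.\ while $(\mu_\delta\times\mu_\delta)(\delta^2)=1$, this gives at once $\lluo G_\delta(X)\rruo\le\tfrac12\diam_\iii(\AAA)\diam_\iii(\BBB)\lluo X\rruo$.

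For the sharper Chebyshev-radius estimate I would first use the shift invariance to assume $\AAt,\BBt$ are already centred at their Chebyshev centres $A_0,B_0$, so that $\lln\AAt\rrn\le r_\iii(\AAA)$ and $\lln\BBt\rrn\le r_\iii(\BBB)$ a.e. Writing $\bar\AAA=\int_\delta\AAt\,d\mu_\delta$, $\CCC_t=\AAt-\bar\AAA$ and symmetrically $\DDD_t=\BBt-\bar\BBB$, one has $G_\delta(X)=\int_\delta\CCC_t X\DDD_t\,d\mu_\delta(t)$ together with the variance-operator bounds
$$\int_\delta\CCC_t\CCC_t^*\,d\mu_\delta=\int_\delta\AAt\AAt^*\,d\mu_\delta-\bar\AAA\bar\AAA^*\le\int_\delta\AAt\AAt^*\,d\mu_\delta\le r_\iii(\AAA)^2 I,$$
and likewise $\int_\delta\DDD_t^*\DDD_t\,d\mu_\delta\le r_\iii(\BBB)^2 I$. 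The decisive step is to factor the transformer through the vector functionalizations of Lemma \ref{zatvorenjeogranichen}: comparing quadratic forms gives $G_\delta(X)=(\vec{\CCC^*})^{\,*}\,\hat X\,\vec{\DDD}$, where $\vec{\DDD},\vec{\CCC^*}:\HH\to\LDH$, $\hat X$ is multiplication by $X$ on $\LDH\cong\HH\otimes\LD$, and (Lemma \ref{zatvorenjeogranichen}) $\lln\vec{\DDD}\rrn=\lln\int_\delta\DDD_t^*\DDD_t\,d\mu_\delta\rrn^{1/2}\le r_\iii(\BBB)$, $\lln\vec{\CCC^*}\rrn=\lln\int_\delta\CCC_t\CCC_t^*\,d\mu_\delta\rrn^{1/2}\le r_\iii(\AAA)$. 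Because the ampliation $X\mapsto\hat X$ is completely contractive, the map $X\mapsto(\vec{\CCC^*})^{\,*}\hat X\vec{\DDD}$ is completely bounded with cb-norm at most $\lln\vec{\CCC^*}\rrn\,\lln\vec{\DDD}\rrn\le r_\iii(\AAA)r_\iii(\BBB)$, and a completely bounded elementary map contracts every u.i.\ norm by its cb-norm (cf.\ \cite{P-R}); hence $\lluo G_\delta(X)\rruo\le r_\iii(\AAA)r_\iii(\BBB)\lluo X\rruo$.

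Combining the two estimates, passing to the supremum over admissible $\delta$ and then to the minimum of the constants proves \eqref{oshtrina0}. I expect the main obstacle to be precisely the radius estimate for arbitrary (neither normal nor commuting) bounded fields and a general u.i.\ norm: the strong Cauchy--Schwarz inequality (Theorem 3.2 in \cite{JOC}) is unavailable here, and because $\hat X$ is an infinite ampliation it does not lie in $\ccu$, so one cannot simply invoke the ideal property with $\hat X$ in the middle. It is exactly the complete contractivity of the ampliation that transfers the operator-norm variance bounds $\lln\int\CCC_t\CCC_t^*\,d\mu_\delta\rrn\le r_\iii(\AAA)^2$ and $\lln\int\DDD_t^*\DDD_t\,d\mu_\delta\rrn\le r_\iii(\BBB)^2$ into a u.i.-norm estimate carrying the correct constant $r_\iii(\AAA)r_\iii(\BBB)$.
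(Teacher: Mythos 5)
Your overall architecture coincides with the paper's: pass to the probability measure $\mu_\delta=\frac1{\mu(\delta)}\mu$ on $\delta$, apply the Korkine identity \eqref{22}, recentre at the Chebyshev centres $A_0,B_0$, bound the resulting variance operators by $r_\iii^2(\AAA)\cdot I$ and $r_\iii^2(\BBB)\cdot I$ (this is exactly \eqref{poluprechnik1}--\eqref{poluprechnik2}), and then feed these operator-norm bounds into a Cauchy--Schwarz estimate for the inner product type transformer in which the outer factors carry the operator norm and $X$ the unitarily invariant norm. Your diameter bound via the triangle inequality under the integral sign together with the ideal property is a harmless variant (the paper instead reuses the same Cauchy--Schwarz estimate with the cruder bound $\diam_\iii^2(\AAA)/2$ on the variance); note only that $\llu\int F\dm\rru\le\int\llu F\rru\dm$ for a Gel'fand integral itself needs a word, e.g.\ via the duality formula for $\lluo\cdot\rruo$.

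The one step that is not adequately justified is the decisive one, and you correctly flag it as such: the inequality $\llu\int_\delta\CCC_tX\DDD_t\,d\mu_\delta(t)\rru\le\lln\int_\delta\CCC_t\CCC_t^*\,d\mu_\delta(t)\rrn^{1/2}\lln\int_\delta\DDD_t^*\DDD_t\,d\mu_\delta(t)\rrn^{1/2}\lluo X\rruo$ for arbitrary (non-normal, non-commuting) bounded fields and an arbitrary u.i.\ norm. Your factorization $G_\delta(X)=(\vec{\CCC^*})^{\,*}\hat X\vec{\DDD}$ is correct, and the operator-norm identities for $\vec{\DDD}$ and $\vec{\CCC^*}$ from Lemma \ref{zatvorenjeogranichen} are the right ingredients, but the assertion that a completely bounded elementary map contracts every u.i.\ norm by its cb-norm is not something \cite{P-R} provides (that paper concerns the operator norm on $C^*$-algebras), and it is not a triviality: since $\hat X=X\otimes I$ is an infinite ampliation it lies in no norm ideal, so the ideal property cannot be applied with $\hat X$ in the middle, and transferring the cb-norm bound to all u.i.\ norms requires either a direct singular-value majorization argument or interpolation between $\ccc_1(\HH)$ and $\BH$. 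The missing inequality is precisely Lemma 3.1(c) of \cite{JOC}, which is what the paper invokes at the corresponding point of \eqref{poboljshana}; with that citation (or a proof of it) in place of the appeal to \cite{P-R}, your argument closes and is essentially the paper's.
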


\begin{proof}
Let
$r_\iii(\AAA)=\displaystyle\lln\AAA-A_0\rrn_\infty=\min_{A\in\BH}\lln
\AAA-A\rrn_\infty$, let
$r_\iii(\BBB)=\displaystyle\lln\BBB-B_0\rrn_\infty$
$\displaystyle=\min_{B\in\BH}\lln \BBB-B\rrn_\infty$ and let us note
that
\begin{eqnarray*}
\frac1{\mu(\delta)}\int_\delta\left|\mathscr{A}_t-A_0\right|^2\dt&\le&
\frac1{\mu(\delta)}
\int_\delta\supess_{t\in\WWW}\lln\mathscr{A}_t-A_0\rrn^2
\cdot I\dt\\
&=&\lln \mathscr{A}-A_0\rrn^2_\iii\cdot I=r_\iii^2(\AAA)\cdot
I.\end{eqnarray*} Therefore
$\lln\frac1{\mu(\delta)}\int_\delta\left|\mathscr{A}_t-A_0\right|^2\dt\rrn^\frac12
\le r_\iii(\AAA) $ and
$\lln\frac1{\mu(\delta)}\int_\delta\left|\mathscr{B}_t-B_0\right|^2\dt\rrn^\frac12
\le r_\iii(\BBB) $
 goes similarly.
By identity (\ref{22}) applied to probability measure
$\frac{1}{\mu(\delta)}\mu$ on $\delta$ and Lemma \ref{optlema}  we
then  have
$$
\frac1{2\mu(\delta)^2}\int_{\delta^2}\left|\mathscr{A}_s-\mathscr{A}_t\right|^2d(\mu\times\mu)(s,t)
=\frac1{\mu(\delta)}\int_{\delta}\left|\mathscr{A}_t-\frac1{\mu(\delta)}\int_\delta
\mathscr{A}_t\dt\rra^2\,d\mu(t)=$$
\begin{eqnarray*}&=&\frac1{\mu(\delta)}\int_\delta\left|\mathscr{A}_t-A_0\right|^2-\lla\frac1{\mu(\delta)} \int_\delta\AAA_t\dt-A_0\rra^2
d\mu(t)\\
&\le& \lln \mathscr{A}-A_0\rrn^2_\iii\cdot I-\lla\frac1{\mu(\delta)}
\int_\delta\AAA_t\dt-A_0\rra^2
\end{eqnarray*}
and therefore
\begin{eqnarray}
&&\lln\frac1{2\mu(\delta)^2}\int_{\delta^2}\left|\mathscr{A}_s-\mathscr{A}_t\right|^2d(\mu\times\mu)(s,t)\rrn\label{poluprechnik1}\\
&\le& \lln \lln\mathscr{A}-A_0\rrn^2_\iii\cdot I-\lla
\frac1{2\mu(\delta)}\int_\delta\AAA_t\dt-A_0\rra^2\rrn.\nonumber
\end{eqnarray}
Similarly,
\begin{eqnarray}
&&\lln\frac1{\mu(\delta)^2}\int_{\delta^2}\left|\mathscr{B}_s-\mathscr{B}_t\right|^2d(\mu\times\mu)(s,t)\rrn\label{poluprechnik2}\\
&\le&\lln \lln\mathscr{B}-B_0\rrn^2_\iii\cdot I-\lla
\frac1{\mu(\delta)}\int_\delta\BBB_t\dt-B_0\rra^2\rrn.\nonumber
\end{eqnarray}

Those inequalities show that 
subfields
$(\mathscr{A}_t-\mathscr{A}_s)_{(s,t)\in\delta\times\delta}$ and
$(\mathscr{B}_t-\mathscr{B}_s)_{(s,t)\in\delta\times\delta}$ are in
$L^2(\delta\times\delta,\frac1{\mu(\delta)}\mu\times\frac1{\mu(\delta)}\mu,\BH)$,
and therefore according to  identity (\ref{22}) and Lemma 3.1(c)
from \cite{JOC},

\begin{eqnarray}
&&\lefteqn{\llu \frac1{\mu(\delta)}\int_\delta \AAA_tX\BBB_td\mu(t)-
\frac1{\mu(\delta)}\int_\delta\mathscr{A}_t\dt X  \frac1{\mu(\delta)}\int_\delta\mathscr{B}_t \dt\rru}\nonumber\\
&=&\llu\frac1{2\mu(\delta)^2}\int_{\delta^2}(\AAA_s-\AAA_t)X(\BBB_s-\BBB_t)d(\mu\times\mu)(s,t)\rru\nonumber\\
&\le& \lln\frac1{2\mu(\delta)^2}\int_{\delta^2}\left|\mathscr{A}_s-\mathscr{A}_t\right|^2d(\mu\times\mu)(s,t)\rrn^\frac12\cdot\nonumber\\
&&
\cdot\lln\frac1{2\mu(\delta)^2}\int_{\delta^2}\left|\mathscr{B}_s-\mathscr{B}_t\right|^2d(\mu\times\mu)(s,t)\rrn^\frac12
\!\!\lluo X\rruo\nonumber\\
&\le&
 \lln\lln\mathscr{A}-A_0\rrn^2_\iii\cdot I
 -\lla \frac1{\mu(\delta)}\int_\delta\AAA_t\dt\!-\!A_0\rra^2\rrn^\frac12\cdot \label{poboljshana}\\
&\cdot&\!\lln \lln\mathscr{B}\!-\!B_0\rrn^2_\iii\cdot I
-\lla\frac1{\mu(\delta)}\int_\delta\BBB_t\dt-B_0\rra^2\rrn^\frac12\lluo
X\rruo
\nonumber\\
&\le&r_\iii(\AAA)r_\iii(\BBB)\lluo X\rruo,
\nonumber
\end{eqnarray}
and the first half of inequality \eqref{oshtrina0} is proved. The
proof for the remaining  part of  \eqref{oshtrina0} differs from the
previous one only by use of obvious estimates
$$\lln\frac1{\mu(\delta)^2}\int_{\delta^2}\left|\mathscr{A}_s-\mathscr{A}_t\right|^2d(\mu\times\mu)(s,t)\rrn
\le\frac{\diam_\iii^2(\AAA)}2$$ $$\mbox{(resp.
$\lln\frac1{\mu(\delta)^2}\int_{\delta^2}\left|\mathscr{B}_s-\mathscr{B}_t\right|^2d(\mu\times\mu)(s,t)\rrn
\le\frac{\diam_\iii^2(\BBB)}2$)}$$ instead if \eqref{poluprechnik1}
(resp.
           \eqref{poluprechnik2}) in \eqref{poboljshana}.
\end{proof}

Now we turn to the less general case when
$(\mathscr{A}_t)_{t\in\Omega}$ and $(\mathscr{B}_t)_{t\in\Omega}$
are bounded fields of self-adjoint (generally non-commuting)
operators, in which case the above inequality has the following
form.

\begin{corollary}
\label{th1} If $\mu$ is a probability measure on $\Omega$, let
$C,D,E,F$ be bounded self-adjoint operators and let
$(\mathscr{A}_t)_{t\in\Omega}$ and $(\mathscr{B}_t)_{t\in\Omega}$ be
bounded self-adjoint fields satisfying $C\le\mathscr{A}_t\le D$ and
$E\le\mathscr{B}_t\le F$ for all $t\in\Omega$. Then for all
$X\in\ccu$,

\begin{equation}
\llu\int_\Omega\mathscr{A}_tX\mathscr{B}_t\dt-
\int_\Omega\mathscr{A}_t\dt \,X \int_\Omega\mathscr{B}_t \dt\rru
\le\dfrac{\|D-C\|\cdot\|F-E\|}4\cdot\lluo X\rruo. \label{oshtrina}
\end{equation}
\end{corollary}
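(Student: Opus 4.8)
The plan is to derive this corollary directly from Theorem \ref{th0}. Since $\mu$ is a probability measure, $\mu(\Omega)=1$, so the set $\delta=\Omega$ is admissible in the supremum appearing in \eqref{oshtrina0}, and for this choice all the normalizing factors $\frac1{\mu(\delta)}$ equal $1$. Consequently the left-hand side of \eqref{oshtrina} is precisely the summand corresponding to $\delta=\Omega$, hence does not exceed the supremum in \eqref{oshtrina0}. It therefore suffices to bound the quantity $r_\iii(\AAA)\,r_\iii(\BBB)$ appearing there by $\frac{\|D-C\|\cdot\|F-E\|}4$.

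To estimate $r_\iii(\AAA)$, I would choose the candidate center $A_0=\frac{C+D}2\in\BH$. Subtracting $\frac{C+D}2$ throughout the operator inequality $C\le\mathscr{A}_t\le D$ gives
\[
-\frac{D-C}2\le\mathscr{A}_t-\frac{C+D}2\le\frac{D-C}2\qquad\textrm{for all }t\in\WWW.
\]
Here $\frac{D-C}2$ is a positive operator, and for any self-adjoint $S$ satisfying $-T\le S\le T$ with $0\le T$ one has $-\|T\|I\le S\le\|T\|I$ and hence $\|S\|\le\|T\|$. Applying this with $S=\mathscr{A}_t-\frac{C+D}2$ and $T=\frac{D-C}2$ yields $\|\mathscr{A}_t-\frac{C+D}2\|\le\frac{\|D-C\|}2$ for every $t$, so that
\[
r_\iii(\AAA)\le\supess_{t\in\WWW}\lln\mathscr{A}_t-\tfrac{C+D}2\rrn\le\frac{\|D-C\|}2.
\]

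By the identical argument applied to $(\mathscr{B}_t)_{t\in\Omega}$ with center $\frac{E+F}2$, one obtains $r_\iii(\BBB)\le\frac{\|F-E\|}2$. Multiplying these two bounds gives $r_\iii(\AAA)\,r_\iii(\BBB)\le\frac{\|D-C\|\cdot\|F-E\|}4$, and combining this with the reduction of the first paragraph establishes \eqref{oshtrina}. The only genuinely operator-theoretic point is the passage from the order bounds $C\le\mathscr{A}_t\le D$ to a norm bound on $\mathscr{A}_t-\frac{C+D}2$; everything else is a direct specialization of Theorem \ref{th0}, with the sharpness constant $\frac14$ arising precisely because the optimal enclosing radius of the order interval $[C,D]$ is half of its diameter $\|D-C\|$.
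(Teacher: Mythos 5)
Your proposal is correct and follows essentially the same route as the paper: specialize Theorem \ref{th0} to $\delta=\Omega$ (admissible since $\mu(\Omega)=1$) and bound $r_\iii(\AAA)$, $r_\iii(\BBB)$ by centering at $\frac{C+D}2$ and $\frac{E+F}2$. The paper carries out the passage from $-\frac{D-C}2\le\mathscr{A}_t-\frac{C+D}2\le\frac{D-C}2$ to the norm bound via quadratic forms $\sup_{\|f\|=1}\lla\langle\cdot f,f\rangle\rra$, which is the same operator-theoretic fact you invoke.
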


\begin{proof}
As $\frac{C-D}2\le\mathscr{A}_t-\frac{C+D}2\le\frac{D-C}2$ for every
$t\in\Omega$, then
\begin{eqnarray*}
\supess_{t\in\WWW}\lln \mathscr{A}_t-\frac{C+D}2\rrn&=&
\supess_{t\in\WWW}\sup_{\lln
f\rrn=1}\lla\llp\llm\mathscr{A}_t-\frac{C+D}2 \rrm f,f\rrp\rra\\
&\le& \sup_{\lln f\rrn=1}\lla\llp\frac{D-C}2 f,f\rrp\rra= \frac{\lln
D-C\rrn}2,
\end{eqnarray*}   
which implies
 $r_\iii(\AAA)\le
\frac{\lln D-C\rrn}2,$ and          similarly
$r_\iii(\BBB)\le\frac{\lln F-E\rrn}2.$ Thus \eqref{oshtrina} follows
directly from
    (\ref{oshtrina0}).
\end{proof}

\begin{remark}
Note that similar to the estimate   (\ref{poboljshana}) the
righthand side of \eqref{oshtrina} can be improved to
\begin{eqnarray}
\label{poboljshana1}&&  \lln \lln\frac{D-C}2\rrn^2
   -\lla \int_\Omega\AAA_t\dt-\frac{C+D}2\rra^2\rrn^\frac12\cdot\\
  &\cdot&\lefteqn{  \lln \lln\frac{F-E}2\rrn^2
   -\lla \int_\Omega\BBB_t\dt-\frac{E+F}2\rra^2\rrn^\frac12\lluo X\rruo.}\nonumber
\end{eqnarray}

Estimate similar to (\ref{poboljshana1})  was given in a Gr\"uss
type inequality for  square integrable Hilbert space valued
functions in Theorem 3 in \cite{B-C-D-R}.
\end{remark}

%
 In case of $\HH=\mathbb{C}$  and $\mu$ being the  normalized
Lebesgue measure on $[a,b]$ (i.e. $d\,\mu(t)=\frac{dt}{b-a}$), then
(\ref{grisovaca}) comes as an obvious corollary of  Theorem
\ref{th1}. This special case also confirms the sharpness of the
constant $\frac14$ in the inequality (\ref{oshtrina}).

Taking $\Omega=\{1,\ldots,n\}$ and $\mu$ to be the normalized
counting measure we get  another  corollary of  Theorem \ref{th1},
which gives us the following
 \begin{corollary}[Gr\"uss type inequality for elementary operators]
 Let $A_1,
 \hdots, A_n$, $B_1, \hdots, B_n$, $C, D, E$ and $F$ be bounded linear self-adjoint operators
 acting on a Hilbert space $\HH$
 such that
 $C\le A_i\le D$ and
$E\le B_i\le F$ for all $i=1,2,\cdots,n$ then for arbitrary
$X\in\ccu$,
\begin{eqnarray}\nonumber
\llu  \frac1n\sum_{i=1}^n A_i XB_i-\frac1{n^2}\sum_{i=1}^nA_i\,
X\sum_{i=1}^nB_i\rru \leq \frac{\|D-C\|\|F-E\|}4  \lluo X\rruo\,.
\end{eqnarray}
\end{corollary}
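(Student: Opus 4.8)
The plan is to obtain this statement as a direct specialization of Corollary~\ref{th1} by choosing the underlying measure space to be finite and atomic, so that Gel'fand integrals collapse to ordinary finite sums. First I would set $\Omega=\{1,2,\ldots,n\}$ with $\WWWalg$ its full power set, and take $\mu$ to be the normalized counting measure, so that $\mu(\{i\})=\frac1n$ for each $i$ and hence $\mu(\Omega)=1$; this makes $\mu$ a probability measure, exactly as required in Corollary~\ref{th1}. I would then define the fields by $\mathscr{A}_i=A_i$ and $\mathscr{B}_i=B_i$ for $i=1,\ldots,n$. Since $\Omega$ is finite, every measurability and integrability hypothesis is automatic, and the operator order hypotheses $C\le A_i\le D$ and $E\le B_i\le F$ for all $i$ are precisely the conditions $C\le\mathscr{A}_t\le D$ and $E\le\mathscr{B}_t\le F$ for all $t\in\Omega$.

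Next I would evaluate the Gel'fand integrals against $\mu$. For any bounded field $(\mathscr{C}_t)_{t\in\Omega}$ on this finite atomic space, the quadratic form characterization of Lemma~\ref{novadefinicionalema} gives $\langle(\int_\Omega\mathscr{C}_t\dt)f,f\rangle=\sum_{i=1}^n\frac1n\langle\mathscr{C}_if,f\rangle$ for all $f\in\HH$, whence $\int_\Omega\mathscr{C}_t\dt=\frac1n\sum_{i=1}^n\mathscr{C}_i$. Applying this to the relevant fields yields $\int_\Omega\mathscr{A}_t\dt=\frac1n\sum_{i=1}^nA_i$, $\int_\Omega\mathscr{B}_t\dt=\frac1n\sum_{i=1}^nB_i$, and for the inner product type transformer $\int_\Omega\mathscr{A}_tX\mathscr{B}_t\dt=\frac1n\sum_{i=1}^nA_iXB_i$. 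Substituting these identities into the left-hand side of~(\ref{oshtrina}) produces exactly
$$\llu\frac1n\sum_{i=1}^nA_iXB_i-\frac1{n^2}\sum_{i=1}^nA_i\,X\sum_{i=1}^nB_i\rru,$$
which is the quantity appearing in the corollary.

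Since the right-hand bound $\frac{\|D-C\|\,\|F-E\|}4\cdot\lluo X\rruo$ of~(\ref{oshtrina}) is unaffected by the choice of $\Omega$ and $\mu$, the asserted inequality then follows immediately from Corollary~\ref{th1}. There is essentially no analytic obstacle here: the only point requiring care is the bookkeeping that translates the Gel'fand integral notation into finite sums, and this is trivialized by the finiteness of $\Omega$, which collapses all the weak${}^*$-measurability and square-integrability conditions that were genuinely at issue in the continuous setting.
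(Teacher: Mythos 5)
Your proposal is correct and is essentially identical to the paper's own argument: the paper also obtains this corollary by taking $\Omega=\{1,\ldots,n\}$ with the normalized counting measure and specializing Corollary~\ref{th1}, the Gel'fand integrals collapsing to the finite averages $\frac1n\sum_{i=1}^nA_i$, $\frac1n\sum_{i=1}^nB_i$ and $\frac1n\sum_{i=1}^nA_iXB_i$. Your extra care in verifying the integral-to-sum identification via the quadratic form characterization is sound but left implicit in the paper.
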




\bibliographystyle{amsplain}

\end{document}